\newtheorem{lemma}{Lemma}
\newtheorem{theorem}[lemma]{Theorem}
\newtheorem{proposition}[lemma]{Proposition}
\begin{document}

\title{\textsf{On the construction of general equilibria in a competitive economy}}

\author[Hendtlass]{Matthew Hendtlass}
 \address{School of Mathematics and Statistics,
    University of Canterbury,
    Christchurch 8041,
    New Zealand}
\email{matthew.hendtlass@canterbury.ac.nz}

\author[Miheisi]{Nazar Miheisi}
 \address{King's College London, 
 Strand, 
 London WC2R 2LS, 
 United Kingdom}
\email{nazar.miheisi@kcl.ac.uk}

\begin{abstract}
\noindent \textsf{This paper gives a constructive treatment of McKenzie's theorem on the existence of general equilibria. While the full theorem does not admit a constructive proof, and hence does not admit a computational realisation, we show that if we strengthen the conditions on our preference relation---we require $\succ$ to be uniformly rotund in the sense of Bridges \cite{dsb_92}---then we can find `approximate equilibrium points,' points at which the collective profit may not be maximal, but can be made arbitrarily close to being maximal.}
\end{abstract}


\maketitle

\section{Introduction}

\noindent
In a series of papers \cite{dsb_82,dsb_89,dsb_92,dsb_94} Bridges gave a development of the theory of utility functions in constructive mathematics. In this paper we extend on these beginnings to a rigorously constructive development of mathematical economics. We consider the existence (explicit construction) of general equilibria in a competitive economy: Does there exist an algorithm to construct a competitive equilibrium (under economically reasonable conditions)? Since the construction of competitive equilibria seems to require Brouwer's fixed point theorem---which is not constructively valid---the answer would appear to be no. We satisfy ourselves with the construction of price vectors which are almost equilibria.

\bigskip
\noindent
We begin by giving a review of (Bishop's) constructive mathematics, the system in which this paper is written. Fundamentally, constructive mathematics differs from standard `classical' mathematics in the interpretation of the phrase `there exists'. Classically an object is asserted to exist if its failure to exist is contradictory; constructively we demand more: an object can be asserted to exist only if we can give a procedure which allows the construction of approximations, of arbitrary accuracy, in a finite time---dependent on the desired accuracy---with finite computational resources. Formally, constructive mathematics is essentially mathematics with intuitionistic logic and dependent choice\footnote{For an introduction to the practice of constructive mathematics see \cite{BB,BR,BV}; formally we view constructive mathematics as constructive \text{ZF} set theory \cite{AR} plus dependent choice, with intuitionistic logic}. Adding the \textbf{law of excluded middle} 
 \begin{quote}
  \textbf{LEM}: For each proposition $P$, either $P$ holds or $\neg P$ holds.
 \end{quote}
 \noindent
to constructive mathematics returns classical mathematics (with dependent choice, rather than the full axiom of choice). Working with intuitionistic logic ensures that proofs proceed in a manner which preserves computational content: a proof of $A\Rightarrow B$ converts a witness of $A$ into a witness of $B$. In particular, a constructive proof of $\exists_xP(x)$ embodies an algorithm for the construction of an object $x$, and an algorithm verifying that $P(x)$ holds. In this manner, constructive mathematics can be viewed as a high level programming language. One should note, however, that the constructive mathematician is interested only in what is within the realms of computability and not with computational efficiency. On the other hand, there have been (successful) efforts to extract efficient algorithms from constructive proofs \cite{HS}.

\bigskip
\noindent
We hope a (vague) example will illuminate the necessary mindset of the constructive mathematician, and the rewards for his or her hard work. The intermediate value theorem says that for every uniformly continuous function $f:[0,1]\rightarrow\mathbf{R}$ with $f(0)=-1$ and $f(1)=1$, there exists $x\in[0,1]$ such that $f(x)=0$. The classical argument is simple: define 
 $$
  x=\sup\{y\in[0,1]:f(y)<0\},
 $$
\noindent
then $f(x)=0$ by continuity. This proof, however, gives us no idea of the value of $x$ unless we can calculate this supremum, which we generally cannot. 

\bigskip
\noindent
Constructively, we must be more careful; indeed, it is in general not possible to construct such an $x$, but we can construct approximate zeros: for all $\varepsilon>0$ there exists $x\in[0,1]$ such that $|f(x)|\leqslant\varepsilon$. Here is such a construction. Fix $\varepsilon$ and let $n\in\mathbf{N}$ be such that for all $s,t\in[0,1]$, if $|s-t|\leqslant1/n$, then $|f(x)-f(y)|<\varepsilon/2$; we consider the set $S=\{0,1/n,\ldots,1\}$. We cannot generally decide whether $f(x)<0\vee f(x)=0\vee f(x)>0$ for each $x\in S$: we might check the first million digits of $f(x)$ before giving up and deciding it is probably zero, while the one millionth and first digit shows it to be positive (or negative)---to show $f(x)$ is zero requires checking infinitely many digits. By checking enough digits we can, however, decide $f(x)<0\vee |f(x)|<\varepsilon\vee f(x)>0$. 

\bigskip
\noindent
If for any element of $S$ we decide that $|f(x)|<\varepsilon$, then we are done. If, on the other hand, we have that $f(x)<0$ or $f(x)>0$ for each $x\in S$, then there must be some $m\in\{0,1,\ldots,n-1\}$ such that $f(m/n)<0$ and $f((m+1)/n)>0$. That $\max\{|f(m/n)|,|f((m+1)/n)|\}>\varepsilon$ would contradict our choice of $n$; whence both $|f(m/n)|$ and $|f((m+1)/n)|$ are less than $\varepsilon$, and in this case we are also done. This argument, which is essentially the one dimensional case of Scarf's algorithm for Brouwer's fixed point theorem \cite{Scarf1}, actually produces for us an approximate zero of the desired accuracy.

\bigskip
\noindent
We now turn our attention to competitive equilibria.

\bigskip
\noindent
We work in the economic model used by McKenzie \cite{M1,M2}; we have $N$ commodities, $n$ producers, and $m$ consumers. To each producer we associate a production set $Y_i\subset\mathbf{R}^N$; and to each consumer a consumption set $X_i\subset\mathbf{R}^N$ endowed with a preference relation $\succcurlyeq_i$. Further we assume that each consumer has no initial endowment, and we write $x\mathord{\sim}_ix^\prime$ for $x\succ_i x^\prime\vee x^\prime\succ_i x$ and $x \succ_i x^\prime$ for $x \succcurlyeq_i x^\prime\wedge x\not\mathord{\sim}_i x^\prime$. When the price $\mathbf{p}\in\mathbf{R}^N$ prevails, we define:
\begin{itemize}
 \item[$\blacktriangleright$] the \textbf{budget set} for consumer $i$ 
  $$
   \beta_i(\mathbf{p})=\left\{x_i\in X_i:\mathbf{p}\cdot\mathbf{x}_i\leqslant0\right\};
  $$
 \item[$\blacktriangleright$] the $\mathbf{p}$ \textbf{upper contour set} of consumer $i$
  $$ 
   C_i(\mathbf{p})=\left\{\mathbf{x}_i\in X_i:\mathbf{x}_i\succcurlyeq\mathbf{\xi}\mbox{ for all }\mathbf{\xi}\in\beta_i(\mathbf{p})\right\};
  $$
 \item[$\blacktriangleright$] consumer $i$'s \textbf{demand set}
  $$
   D_i(\mathbf{p})=\beta_i(\mathbf{p})\cap C_i(\mathbf{p}).
  $$
\end{itemize}
\noindent
In order to prove meaningful results we must restrict our attention to consumers who act in a suitably reasonable manner; the following restrictions on $\succcurlyeq$ give some measures of the reasonable consumer. Let $\succ$ be a preference relation on a subset $X$ of $\mathbf{R}^N$.
 \begin{itemize}
  \item[$\blacktriangleright$] $\succ$ is a \textbf{continuous preference relation} if 
  the graph 
   $$
    \{(x,x^\prime):x\succ x^\prime\}
   $$
  \noindent
  of $\succcurlyeq$ is open.
 \item[$\blacktriangleright$] $\succ$ is \textbf{strictly convex} if $X$ is convex and $tx+(1-t)x^\prime\succ x^\prime$ whenever $x\succcurlyeq x^\prime$, $x\neq x^\prime$, and $t\in(0,1)$.
 
 \item[$\blacktriangleright$] $X$ is \textbf{uniformly rotund} if for each $\varepsilon>0$ there exists $\delta>0$ such that for all $x,x^\prime\in X$, if $\Vert x-x^\prime\Vert\geqslant\varepsilon$, then 
 $$
  \left\{\frac{1}{2}\left(x+x^\prime\right)+z:z\in B(0,\delta)\right\}\subset X,
 $$
\noindent
where $B(x,r)$ is the open ball of radius $r$ centred on $x$. The preference relation $\succ$ is uniformly rotund if $X$ is uniformly rotund and for each $\varepsilon>0$ there exists $\delta>0$ such that if $\Vert x-x^\prime\Vert\geqslant\varepsilon$ ($x,x^\prime\in X$), then for each $z\in B(0,\delta)$ either $\frac{1}{2}\left(x+x^\prime\right)+z\succ x$ or $\frac{1}{2}\left(x+x^\prime\right)+z\succ x^\prime$.
 \end{itemize}
\noindent
Bridges showed in \cite{dsb_93} that a uniformly rotund preference relation is strictly convex. 

\bigskip
\noindent
A set $S$ is said to be \textbf{inhabited} if there exists $x$ such that $x\in S$. An inhabited subset $S$ of a metric space $X$ is \textbf{located} if for each $x\in X$ the \textbf{distance} 
 \begin{equation*}
  \rho \left( x,S\right) =\inf \left\{ \rho (x,s):s\in S\right\}
 \end{equation*}
\noindent 
from $x$ to $S$ exists. An $\varepsilon$\textbf{-approximation} to $S$ is a subset $T$ of $S$ such
that for each $s\in S$, there exists $t\in T$ such that $\rho
(s,t)<\varepsilon $. We say that $S$ is \textbf{totally bounded} if for each $%
\varepsilon >0$ there exists a finitely enumerable $\varepsilon $-approximation to $S$. If each bounded subset of $S$ is contained in a totally bounded subset of $S$, then $S$ is said to be \textbf{locally totally bounded}. A metric space $X$ is \textbf{compact} if it is both complete and totally bounded.

\bigskip
\noindent
In \cite{dsb_92} Bridges gave a constructive proof of the following theorem (see also \cite{mh2}).

\begin{theorem}
\label{demand}
Let $\succ$ be a uniformly rotund preference relation on a compact subset $X$ of $\mathbf{R}^N$, let $P$ be a bounded set of nonzero vectors in $\mathbf{R}^N$ such that for each $\mathbf{p}\in P$,
 \begin{itemize}
  \item[(i)] $\beta(\mathbf{p})$ is inhabited;
  \item[(ii)] there exists $\mathbf{\xi}\in X$ such that $\mathbf{\xi}\succ\mathbf{x}$ for all $\mathbf{x}\in\beta(\mathbf{p})$.
 \end{itemize}
\noindent
Then $D(\mathbf{p})$ consists of a unique point $F(\mathbf{p})$, the function $F$ is uniformly continuous, and $\mathbf{p}\cdot F(\mathbf{p})=0$.
\end{theorem}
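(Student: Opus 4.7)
The plan is to prove the four conclusions in turn: uniqueness of $F(\mathbf{p})$, existence, the identity $\mathbf{p}\cdot F(\mathbf{p})=0$, and uniform continuity of $F$.

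For uniqueness, I would leverage uniform rotundity directly. Suppose $\mathbf{x},\mathbf{x}'\in D(\mathbf{p})$ with $\Vert\mathbf{x}-\mathbf{x}'\Vert\geqslant\varepsilon$, and let $\delta=\delta(\varepsilon)$ be the corresponding rotundity modulus. Since $\mathbf{p}\neq\mathbf{0}$, I can choose $\mathbf{z}\in B(\mathbf{0},\delta)$ with $\mathbf{p}\cdot\mathbf{z}<0$ (take $\mathbf{z}=-\lambda\mathbf{p}$ for a suitable small $\lambda>0$); combined with $\mathbf{p}\cdot\frac{1}{2}(\mathbf{x}+\mathbf{x}')\leqslant 0$, this places $\frac{1}{2}(\mathbf{x}+\mathbf{x}')+\mathbf{z}$ in $X\cap\beta(\mathbf{p})$. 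Uniform rotundity then yields an element of $\beta(\mathbf{p})$ that $\succ$-dominates either $\mathbf{x}$ or $\mathbf{x}'$, contradicting $\mathbf{x},\mathbf{x}'\in C(\mathbf{p})$. Hence $\Vert\mathbf{x}-\mathbf{x}'\Vert<\varepsilon$ for every $\varepsilon>0$, so $\mathbf{x}=\mathbf{x}'$.

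For existence, I would obtain $F(\mathbf{p})$ as the limit of a sequence of approximate maxima. Since $\beta(\mathbf{p})$ is closed in the compact set $X$, it is totally bounded, and for each $n\in\mathbf{N}$ I can fix a finite $\frac{1}{n}$-approximation $T_n$ of $\beta(\mathbf{p})$. Using an $\varepsilon$-weakened trichotomy on the pairwise $\succ$-comparisons on $T_n$ (available because the graph of $\succ$ is open), I would select $\mathbf{x}_n\in T_n$ that is not $\succ$-dominated by any $\mathbf{y}\in T_n$ to precision $\frac{1}{n}$. A quantitative refinement of the uniqueness argument---applying the rotundity modulus to the perturbed midpoint of any two $\frac{1}{n}$-approximate maxima of $\succ$ over $\beta(\mathbf{p})$---forces $(\mathbf{x}_n)$ to be Cauchy at a rate dictated by the rotundity modulus, and its limit is the required element of $D(\mathbf{p})$.

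The remaining two conclusions are shorter. For $\mathbf{p}\cdot F(\mathbf{p})=0$, I would invoke strict convexity, which follows from uniform rotundity by \cite{dsb_93}: the vector $\xi$ from (ii) satisfies $\xi\succ F(\mathbf{p})$, so every $t\xi+(1-t)F(\mathbf{p})$, $t\in(0,1)$, strictly dominates $F(\mathbf{p})$; if $\mathbf{p}\cdot F(\mathbf{p})<0$ then such combinations remain in $\beta(\mathbf{p})$ for small $t$, contradicting $F(\mathbf{p})\in C(\mathbf{p})$. Since $F(\mathbf{p})\in\beta(\mathbf{p})$ already gives $\mathbf{p}\cdot F(\mathbf{p})\leqslant 0$, we conclude $\mathbf{p}\cdot F(\mathbf{p})=0$. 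For uniform continuity, I would show that $\Vert\mathbf{p}-\mathbf{p}'\Vert$ small (with $\Vert\mathbf{p}\Vert$ bounded below on $P$) makes $\beta(\mathbf{p}')$ Hausdorff-close to $\beta(\mathbf{p})$ on $X$, so that $F(\mathbf{p}')$ is a near-maximum of $\succ$ over $\beta(\mathbf{p})$; the quantitative Cauchy estimate from existence then controls $\Vert F(\mathbf{p}')-F(\mathbf{p})\Vert$ uniformly in $\mathbf{p}$.

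The main obstacle is constructive existence. Classically one would simply take a supremum of $\succ$ over $\beta(\mathbf{p})$; constructively this is not available, and the substance lies in converting the rotundity modulus into an effective Cauchy rate for the sequence of approximate maxima, working only with the $\varepsilon$-weakened decision procedure for $\succ$. Once this quantitative existence is established, uniqueness, budget-tightness, and uniform continuity all follow from fairly standard convexity and Hausdorff-continuity arguments.
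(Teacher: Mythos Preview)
The paper does not give its own proof of this theorem: it is quoted from Bridges~\cite{dsb_92} (with a pointer also to~\cite{mh2}) and used as a black box in the construction of approximate equilibria. So there is no argument in the present paper against which to compare your proposal.

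As a reconstruction of a constructive proof, your outline is broadly along the right lines---uniqueness from the rotundity modulus, existence via a Cauchy sequence of approximate maxima whose rate is controlled by that same modulus, budget-tightness from strict convexity, and uniform continuity from Hausdorff stability of the budget sets. One constructive point deserves more care, however: you write that ``$\beta(\mathbf{p})$ is closed in the compact set $X$, so it is totally bounded,'' but in Bishop's framework a closed subset of a compact space need not be totally bounded; one must first show that $\beta(\mathbf{p})$ is \emph{located}. This is where hypotheses (i) and (ii), together with the linearity of $\mathbf{x}\mapsto\mathbf{p}\cdot\mathbf{x}$ and the convexity of $X$, do genuine work, and it is not automatic. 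Similarly, your uniform-continuity sketch invokes a lower bound on $\Vert\mathbf{p}\Vert$ over $P$ that is not part of the stated hypotheses; since $\beta(\mathbf{p})$ depends only on the direction of $\mathbf{p}$, the real issue is whether nearby price vectors in $P$ have nearby directions, and ``bounded set of nonzero vectors'' does not by itself guarantee this. These are precisely the places where Bridges's original argument carries the technical load, so if you intend to supply a self-contained proof you should expect most of the effort to go there.
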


\noindent
The function $F$ constructed in Theorem \ref{demand} is called the \textbf{demand function} of $(X,\succcurlyeq)$. Classically, we only require $\succ$ to be strictly convex and continuous in order to prove the existence of a demand function \cite{Tak}.

\bigskip
\noindent
A \textbf{competitive equilibrium} of an economy consists of a price vector $\mathbf{p}\in\mathbf{R}^N$, points $\mathbf{\xi}_1,\ldots,\mathbf{\xi}_i\in\mathbf{R}^N$, and a vector $\mathbf{\eta}$ in the \textbf{aggregate production set}
 $$
  Y=Y_1+\cdots+Y_n,
 $$
\noindent
satisfying
 \begin{itemize}
  \item[\textbf{E1}] $\mathbf{\xi}_i\in D_i(\mathbf{p})$ for each $1\leqslant i\leqslant m$.
  \item[\textbf{E2}] $\mathbf{p}\cdot\mathbf{y}\leqslant\mathbf{p}\cdot\mathbf{\eta}=0$ for all $\textbf{y}\in Y$.
  \item[\textbf{E3}] $\sum_{i=1}^m\mathbf{\xi}_i=\mathbf{\eta}$.
 \end{itemize}
\noindent
An economy is said to have \textbf{approximate competitive equilibria} if for all $\varepsilon>0$ there exist a price vector $\mathbf{p}\in\mathbf{R}^N$, points $\mathbf{\xi}_1,\ldots,\mathbf{\xi}_i\in\mathbf{R}^N$, and a vector $\mathbf{\eta}$ satisfying \textbf{E1},\textbf{E3}, and 
 \begin{itemize}
  \item[\textbf{AE}] $\mathbf{p}\cdot\mathbf{\eta}>-\varepsilon$ and $p\cdot y\leqslant0$ for each $y\in Y$.
 \end{itemize}
\noindent
Alternatively, an economy has approximate equilibria if
 $$
  \inf\{\mathbf{p}\cdot(\xi_1,\ldots,\xi_m):\xi_i\in D_i(\mathbf{p})\mbox{ for each }1\leqslant i\leqslant m\}=0.
 $$

\bigskip
\noindent
In an approximate equilibrium each consumer maximises his utility while, in contrast, each firm only approximately maximises its profits. Why not demand that profit is maximised, and allow consumers utility to deviate from the optimal? There are a few reasons why the above definition gives the appropriate notion of approximate equilibrium in the context of constructive mathematics. Our task is to construct a price vector $\mathbf{p}$ satisfying our equilibrium condition, for once this is done the $\xi_i$ are given by Theorem \ref{demand}. Thus it is \textbf{E2} which requires the construction of a fixed point, and hence is not possible constructively. So we are forced, at an approximate equilibrium, to allow firms to make losses; however, these losses can be made arbitrarily small, and a loss of one millionth of a cent, for instance, is no loss at all. Anyway, in a real economy things are sold in units---we deal with discrete set of consumer bundles---and we are not in general able to maximise profit, or consumer utility.


\bigskip
\noindent
Let $F_i$ denote the demand function on $(X_i,\succcurlyeq_i)$. A subset $Y$ of a normed space is said to be a \textbf{convex cone} if $\lambda y\in Y$ and $y+y^\prime\in Y$ whenever $y,y^\prime\in Y$ and $\lambda\geqslant0$. The \textbf{convex conic closure} $\mathrm{cone}(Y)$ of $Y$ is the smallest convex cone containing $Y$; that is,
 $$
  \mathrm{cone}(S)=\{r(tx+(1-t)y):r>0,t\in[0,1],x,y\in S\}.
 $$
\noindent
We use $S^\circ$ to denote the interior of a subset $S$ of a metric space.

\bigskip
\noindent
We can now state McKenzie's theorem on the existence of competitive equilibria.

\begin{quote}
 \textbf{McKenzie's Theorem}\newline\noindent
 \emph{Suppose that} 
  \begin{itemize}
   \item[\emph{(i)}] \emph{each $X_i$ is compact and convex;}
   \item[\emph{(ii)}] \emph{each $\succcurlyeq_i$ is continuous and strictly convex;}
   \item[\emph{(iii)}] \emph{$\left(X_i\cap Y\right)^\circ$ is nonempty for each $i$;}
   \item[\emph{(iv)}] \emph{$Y$ is a closed convex cone;}
   \item[\emph{(v)}] \emph{$Y\cap\left\{\left(x_1,\ldots,x_N\right):x_i\geqslant0\mbox{ for each }i\right\}=\{0\}$; and}
   \item[\emph{(vi)}] \emph{for each $\mathbf{p}\in\mathbf{R}^N$ and each $i$, if $\sum_{i=1}^m F_i(\mathbf{p})\in Y$, then there exists $\textbf{x}_i\in X_i$ such that $\mathbf{x}_i\succ_i F_i(\mathbf{p})$.}
  \end{itemize}
 \noindent
 \emph{Then there exists a competitive equilibrium.}
\end{quote}

\bigskip
\noindent
The standard proofs of McKenzie's theorem on the existence of general equilibria all contain seemingly necessary applications of Brouwer's fixed point theorem (often in the guise of Kakutani's fixed point theorem). Since the construction of exact fixed points is not possible---even with strong assumptions on the function, like the uniqueness of any fixed point \cite{Orehkov, V}---it seems unlikely that a constructive proof of the existence of exact competitive equilibria is possible under any economically reasonable assumptions.

\bigskip
\noindent
It may seem that given Bridges constructive proof of Theorem \ref{demand} and Scarf's algorithm for finding approximate fixed points \cite{Scarf1}, that all the hard work for giving a computational version of McKenzie's theorem has already been done. This is not the case: much care and attention must be given to the construction of the, family, of set valued mappings to which we will apply Kakutani's fixed point theorem.

\section{Constructing competitive equilibria}

\noindent
Our constructive version of McKenzie's theorem is the following.

\begin{theorem}
 \label{cMcKenzie}
 Suppose that 
  \begin{itemize}
   \item[(i)] each $X_i$ is compact and convex;
   \item[(ii)] each $\succcurlyeq_i$ is continuous and uniformly rotund;
   \item[(iii)] $\left(X_i\cap Y\right)^\circ$ is inhabited for each $i$;
   \item[(iv)] $Y$ is a located closed convex cone;
   \item[(v)] $Y\cap\left\{\left(x_1,\ldots,x_N\right):x_i\geqslant0\mbox{ for each }i\right\}=\{0\}$; and
   \item[(vi)] for each $\mathbf{p}\in\mathbf{R}^N$ and each $i$, if $\sum_{i=1}^m F_i(\mathbf{p})\in Y$, then there exists $\textbf{x}_i\in X_i$ such that $\mathbf{x}_i\succ_i F_i(\mathbf{p})$.
  \end{itemize}
 \noindent
 Then there are approximate competitive equilibria.
\end{theorem}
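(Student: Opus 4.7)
The plan is to mirror McKenzie's classical argument, replacing the appeal to Kakutani's fixed point theorem with the constructive approximate fixed point produced by Scarf's algorithm. The demand functions needed to set up the problem are supplied by Theorem \ref{demand} (our uniform rotundity hypothesis being exactly what makes that step constructive), and the hypothesis that $Y$ is located is what lets us manipulate $Y$ algorithmically---in particular, it yields computable distances to $Y$ and approximate projections onto it.

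First I would identify the price domain. Let $Y^\ast=\{\mathbf{p}\in\mathbf{R}^N:\mathbf{p}\cdot\mathbf{y}\leqslant 0\text{ for all }\mathbf{y}\in Y\}$ be the polar cone of $Y$, and set $\Pi=Y^\ast\cap\Delta$, where $\Delta=\{\mathbf{p}\in\mathbf{R}^N_+:\sum_i p_i=1\}$ is the standard simplex. Using locatedness of $Y$, one checks that $\Pi$ is located and totally bounded, and conditions (iii) and (v) imply, via a constructive separation argument, that $\Pi$ is inhabited. For each $\mathbf{p}\in\Pi$, the budget set $\beta_i(\mathbf{p})$ is inhabited because any interior point of $X_i\cap Y$ supplied by (iii) satisfies $\mathbf{p}\cdot\mathbf{x}\leqslant 0$; condition (vi) furnishes the strictly preferred vector required by Theorem \ref{demand}(ii). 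Hence Theorem \ref{demand} yields uniformly continuous demand functions $F_i\colon\Pi\to X_i$ with $\mathbf{p}\cdot F_i(\mathbf{p})=0$ (Walras' law), and we write $F(\mathbf{p})=\sum_{i=1}^m F_i(\mathbf{p})$ for the aggregate demand.

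Next I would set up and approximately solve the fixed-point problem. Classically one considers the correspondence $\Phi(\mathbf{p})=\operatorname{argmax}\{\mathbf{q}\cdot F(\mathbf{p}):\mathbf{q}\in\Pi\}$; a fixed point $\mathbf{p}^\ast$ of $\Phi$ satisfies $\mathbf{q}\cdot F(\mathbf{p}^\ast)\leqslant 0$ for every $\mathbf{q}\in\Pi$, which via bipolarity of $Y$ and $Y^\ast$ forces $F(\mathbf{p}^\ast)\in Y$ and yields an exact equilibrium. Constructively, exact argmaxima need not exist, but given $\varepsilon>0$ we can choose a mesh $\delta$ (depending on $\varepsilon$ and a modulus of uniform continuity for $F$), build a $\delta$-triangulation of $\Pi$ using its total boundedness, label each vertex $\mathbf{p}$ by an approximately optimal direction of price adjustment read off from $F(\mathbf{p})$, and run Scarf's algorithm to produce, in finitely many steps, an approximate fixed point $\mathbf{p}\in\Pi$ at which $\mathbf{q}\cdot F(\mathbf{p})\leqslant\varepsilon$ for every $\mathbf{q}\in\Pi$.

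Setting $\xi_i=F_i(\mathbf{p})$ and $\eta=F(\mathbf{p})$ then gives the data for an approximate equilibrium: \textbf{E1} and \textbf{E3} hold by construction, $\mathbf{p}\cdot\mathbf{y}\leqslant 0$ for $\mathbf{y}\in Y$ is automatic from $\mathbf{p}\in\Pi$, and $\mathbf{p}\cdot\eta=0$ by Walras' law. I expect the main obstacle to lie in the passage from Scarf's output to the final \textbf{AE} condition: we need to convert the approximate maximality over $\Pi$ into a quantitative bound $\rho(F(\mathbf{p}),Y)<\varepsilon$, which requires a constructive, effective version of the separation/bipolarity argument for located convex cones. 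Locatedness of $Y$ provides the computable projection, condition (vi) prevents excess demand from drifting into the interior of $Y$ (and so keeps the constant implicit in the duality under control), and the mesh $\delta$ must be chosen fine enough that the approximation error does not swamp the tolerance we wish to achieve. Making all of these quantitative estimates mesh together is, I expect, the substantive work of the proof.
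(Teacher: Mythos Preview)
Your high-level plan---build demand functions via Theorem~\ref{demand}, set up a compact convex price domain inside the polar cone, and run an approximate fixed-point argument---is the paper's plan too. But two concrete choices in your sketch diverge from the paper in ways that matter.

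First, the price normalisation. You take $\Pi=Y^\ast\cap\Delta$ with $\Delta$ the standard simplex. The paper instead fixes $\overline{\xi}=\sum_i\overline{\xi}_i$ with $\overline{\xi}_i\in(X_i\cap Y)^\circ$ and works on $P=\{\mathbf{p}\in Y^{\mathrm{pol}}:\mathbf{p}\cdot\overline{\xi}=-1\}$. This is not cosmetic: by Lemma~\ref{1} every nonzero $\mathbf{p}\in Y^{\mathrm{pol}}$ has $\mathbf{p}\cdot\overline{\xi}<0$, so $P$ is a full cross-section of the polar cone, which is what makes the bipolarity step (Lemma~\ref{5}) and the compactness (Lemma~\ref{4}) go through. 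Your $\Pi$ only captures the nonnegative directions of $Y^\ast$, and nothing in (i)--(vi) forces $Y^\ast\subset\mathbf{R}^N_+$ (that would be free disposal, $-\mathbf{R}^N_+\subset Y$, which is not assumed). So ``$\mathbf{q}\cdot F(\mathbf{p})\leqslant\varepsilon$ for all $\mathbf{q}\in\Pi$'' need not force $F(\mathbf{p})$ near $Y$. Moreover, the normalisation $\mathbf{p}\cdot\overline{\xi}=-1$ is used \emph{again} at the very end of the proof to extract the quantitative estimate, so it is doing double duty.

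Second, and more substantively: note that your line ``$\mathbf{p}\cdot\eta=0$ by Walras' law'' makes the inequality in \textbf{AE} hold for \emph{every} $\mathbf{p}\in Y^{\mathrm{pol}}$ with no fixed point needed at all---this should tell you the real content is elsewhere, namely in tying $\eta=F(\mathbf{p})$ to a \emph{feasible} production point in $Y$. The paper's device for this is a uniformly continuous boundary-crossing retraction $h$ (Lemma~\ref{2}) centred at $\overline{\xi}$: the correspondence to which the fixed-point theorem is applied is $\Phi_r=g_r\circ h\circ F$ with $g_r(\mathbf{z})=\{\mathbf{p}\in P:\mathbf{p}\cdot\mathbf{z}>-r\}$, not the raw argmax correspondence. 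At an approximate fixed point one writes $\zeta=h(F(\mathbf{p}))=t\overline{\xi}+(1-t)\eta$; then $\mathbf{p}\cdot\zeta>-m$ together with $\mathbf{p}\cdot\overline{\xi}=-1$ forces $t<m$, hence $\|\zeta-\eta\|$ is small. Your sketch has no analogue of $h$, and the ``separation/bipolarity argument'' you anticipate as the hard step is in fact replaced by this explicit geometric retraction. Finally, the paper does not invoke Scarf's simplicial algorithm directly but a constructive Kakutani theorem (Theorem~\ref{Kak}) for families of \emph{weakly approximable} correspondences; Lemmas~\ref{6} and~\ref{7} exist precisely to verify that $g_r\circ h\circ F$ meets its hypotheses, and this verification is where much of the technical work lies.
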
 

\noindent
Our proof follows the standard classical proof via Kakutani's fixed point theorem (see \cite{M3}) as closely as possible; typical of constructive mathematics, it has a distinctly geometric character. The \textbf{polar} of a subset $S$ of $\mathbf{R}^N$ is the set
 $$
  S^{\mathrm{pol}}=\left\{\mathbf{p}\in\mathbf{R}^N:\mathbf{p}\cdot\mathbf{x}\leqslant0\mbox{ for all }\mathbf{x}\in S\right\}.
 $$
\noindent
It follows directly from the definition that two sets $S,T\subset \mathbf{R}^N$ have the same polar if and only if they have the same convex conic closure. Classically, a little further work shows that 
\begin{quote}
(*) the polar of the polar of a set is equal to its convex conic closure,
\end{quote}
\noindent
but this is not the case in our intuitionistic setting as the following `Brouwerian counterexample\footnote{A Brouwerian counterexample is a weak counterexample: it is not an example contradicting a proposition, but an example showing a proposition to imply a principle which is unacceptable in constructive mathematics. Generally these can be considered as independence proofs.}' shows.

\bigskip
\noindent
Given a proposition $P$, let $S$ be the set
 $$
  S = \{(0,1)\}\cup\{x:x=(1/2,0)\wedge P\}\cup\{x:x=(1,0)\wedge\neg P\}.
 $$
\noindent
Then $(1,1)\in (S^\mathrm{pol})^\mathrm{pol}$. Suppose that $(1,1)\in \mathrm{cone}(S)$; that is, suppose there exist $r>0$, $t\in[0,1]$, and $x,y\in S$ such that 
 $$ 
  (1, 1) = r(tx + (1 - t)y).
 $$
\noindent
Without loss of generality, we may suppose $x=(0,1)$. Then either $r>2$, in which case $y$ must be $(1/2,0)$ and so $P$ holds, or $r<3$ and, similarly, $\neg P$ must hold. Hence (*) implies the law of excluded middle.

\bigskip
\noindent
The above counterexample is rather contrived and seems to have little to do with real mathematics or economics, but seems merely to indicate how one would show (*) to be independent of some formalisation of constructive mathematics. It is, however, relevant: since our framework encapsulates what is computable in a strict, though ill-defined, way, this example shows that we cannot compute the information implicit in ``$x\in\mathrm{cone}(S)$''---that there exists $r>0,t\in[0,1],y,y^\prime\in S$ such that $x=r(ty+(1-t)y^\prime)$---given only the information that for all $z$, if $z\cdot p\leqslant0$ for all $p\in S$, then $x\cdot z=0$. Succinctly, belonging to the conic closure of a set gives more computational information than belonging to the polar of the polar of that set, and when it comes to computational information we cannot get something for nothing! 

\bigskip
\noindent
The above failure of (*) results from us having a poor handle on the set $S$. The sets we deal with in practice are generally more well behaved, and (*) can be proved, constructively, for a large class of sets. The following result meets our needs.

\begin{proposition}
 Let $S$ be a located closed convex cone in $\mathbf{R}^N$. Then the polar of the polar of $S$ equals $S$.
\end{proposition}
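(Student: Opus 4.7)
The plan is to prove both inclusions separately. The containment $S\subset (S^{\mathrm{pol}})^{\mathrm{pol}}$ is immediate from the definitions: for each $s\in S$ and each $p\in S^{\mathrm{pol}}$ one has $p\cdot s\leqslant 0$, so $s$ lies in $(S^{\mathrm{pol}})^{\mathrm{pol}}$. The real work is the reverse inclusion, and the idea is the constructive version of the standard separation argument: if a putative point $x\in(S^{\mathrm{pol}})^{\mathrm{pol}}$ were at positive distance from $S$, the nearest-point projection would supply a functional separating $x$ from $S$, contradicting membership in the bipolar.

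So fix $x\in(S^{\mathrm{pol}})^{\mathrm{pol}}$. Since $S$ is closed and located, to conclude $x\in S$ it suffices to prove $\rho(x,S)<\varepsilon$ for every $\varepsilon>0$. For such an $\varepsilon$, the standard constructive dichotomy applied to the real number $\rho(x,S)$ returns either $\rho(x,S)<\varepsilon$ (and we are done) or $\rho(x,S)>0$; the task reduces to refuting the second alternative. Assume then that $\rho(x,S)>0$. Because $S$ is a cone, $0\in S$, so $\rho(x,S)\leqslant\|x\|$, and locatedness of $S$ in $\mathbf{R}^N$ allows us to confine the search for a nearest point to the bounded, located, closed, convex set $K=S\cap\overline{B}(x,\|x\|)$, which in finite dimensions is totally bounded and hence compact. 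The uniformly continuous, strictly convex map $s\mapsto\|x-s\|^2$ on $K$ therefore has a unique minimiser which can be located constructively; call it $y\in K\subset S$, so $\|x-y\|=\rho(x,S)$. Setting $p=x-y$, the standard variational inequality $(x-y)\cdot(s-y)\leqslant 0$ for $s\in S$ gives $p\cdot s\leqslant p\cdot y$ for every $s\in S$, and specialising to $s=0$ and $s=2y$ (both in $S$ by the cone property) forces $p\cdot y=0$. Hence $p\in S^{\mathrm{pol}}$, but $p\cdot x=p\cdot y+\|p\|^2=\rho(x,S)^2>0$, contradicting $x\in(S^{\mathrm{pol}})^{\mathrm{pol}}$.

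The main obstacle is the constructive projection step. Classically one waves one's hands and appeals to compactness plus continuity, but constructively one must: first use locatedness of $S$ to compute $\rho(x,S)$; then use it again, together with the cone property, to cut the problem down to the bounded portion $K$, which is genuinely totally bounded because $S$ is located and closed inside $\mathbf{R}^N$; and finally exploit strict convexity of the squared Euclidean norm on the compact convex set $K$ to upgrade a minimising sequence to an honest minimiser $y$. Once the projection is in hand, the separation and contradiction are a short algebraic computation, and the $\varepsilon$ was arbitrary, so $\rho(x,S)=0$ and $x\in S$.
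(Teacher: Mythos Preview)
Your proof is correct and follows essentially the same separation-by-nearest-point argument as the paper: assume $\rho(x,S)>0$, take the closest point $y\in S$, and use the cone property (via $s=0$ and $s=2y$) to show $x-y\in S^{\mathrm{pol}}$ while $(x-y)\cdot x=\|x-y\|^2>0$, contradicting $x\in(S^{\mathrm{pol}})^{\mathrm{pol}}$. The only difference is that you spell out the constructive projection explicitly (restricting to the compact slice $K=S\cap\overline{B}(x,\|x\|)$ and invoking uniform convexity of the squared norm), whereas the paper simply cites Theorem~6 of \cite{BRS} for the existence of the closest point.
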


\begin{proof}
By definition $x\in (S^\mathrm{pol})^\mathrm{pol}$ if and only if
 $$
  \bigcap_{s\in S}\{z\in\mathbf{R}^N: z\cdot s\leqslant0\}\ =\ S^\mathrm{pol}\ \subset\ \{x\}^\mathrm{pol}.
 $$
\noindent
The assumption that $\rho(x, S)>0$ would contradict the above equation: let $y$ be the closest point to $x$ in $S$, this exists by Theorem 6 of \cite{BRS}. Since $S$ is a closed convex cone, $y-x\in S^\mathrm{pol}$, but $x-y\notin\{x\}^\mathrm{pol}$. Hence $\rho(x,S)=0$, and, since $S$ is closed, $x\in S$. The converse is straightforward.
\end{proof}

\bigskip
\noindent
For each $i$ we fix $\overline{\mathbf{\xi}}_i\in \left(X_i\cap Y\right)^\circ$ and let $\overline{\mathbf{\xi}}=\sum_{i=1}^m\overline{\mathbf{\xi}}_i$; without loss of generality, each term of $\overline{\mathbf{\xi}}$ is nonzero. The proof of Theorem \ref{cMcKenzie} proceeds by an application of a constructively valid version of Kakutani's fixed point theorem to the set
 $$
  P=\left\{\mathbf{p}\in Y^{\mathrm{pol}}:\mathbf{p}\cdot\overline{\mathbf{\xi}}=-1\right\}
 $$
\noindent
of normalised price vectors. First, however, we require a number of lemmas. For the remainder of the paper we assume that the hypothesis of Theorem \ref{cMcKenzie} hold. We denote by $B(x,r)$ and $\overline{B}(x,r)$ the open and closed balls, respectively, centred on $x$ with radius $r$.

\begin{lemma}
\label{1}
If $\mathbf{y}\in Y^\circ$, then $\mathbf{p}\cdot\mathbf{y}<0$ for all nonzero $\mathbf{p}\in Y^{\mathrm{pol}}$. Moreover, $\sup\{\mathbf{p}\cdot\mathbf{y}:\mathbf{p}\in Y^{\mathrm{pol}}, \Vert p\Vert=1\}<0$.
\end{lemma}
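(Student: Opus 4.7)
The plan is to extract quantitative information from the hypothesis $\mathbf{y}\in Y^\circ$: fix $r>0$ such that $\overline{B}(\mathbf{y},r)\subset Y$, and then, for each nonzero $\mathbf{p}\in Y^{\mathrm{pol}}$, apply the defining inequality of the polar to a carefully chosen element of the ball $\overline{B}(\mathbf{y},r)$. The right choice is the point $\mathbf{y}+r\mathbf{p}/\Vert\mathbf{p}\Vert$, which maximises the linear functional $\mathbf{z}\mapsto\mathbf{p}\cdot\mathbf{z}$ over $\overline{B}(\mathbf{y},r)$.

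First I would observe that, constructively, ``$\mathbf{p}$ is nonzero'' should be read as $\Vert\mathbf{p}\Vert>0$, since that is the only interpretation compatible with the rest of the statement (and the only one usable as a computable hypothesis). Given such a $\mathbf{p}$, the point $\mathbf{y}+r\mathbf{p}/\Vert\mathbf{p}\Vert$ lies in $Y$ by the choice of $r$, so the polar condition yields
\[
  0\ \geqslant\ \mathbf{p}\cdot\Bigl(\mathbf{y}+\tfrac{r}{\Vert\mathbf{p}\Vert}\mathbf{p}\Bigr)\ =\ \mathbf{p}\cdot\mathbf{y}+r\Vert\mathbf{p}\Vert,
\]
from which $\mathbf{p}\cdot\mathbf{y}\leqslant -r\Vert\mathbf{p}\Vert<0$. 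This proves the first assertion with an explicit quantitative margin.

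For the moreover clause I would simply specialise to $\Vert\mathbf{p}\Vert=1$: the displayed inequality then reduces to $\mathbf{p}\cdot\mathbf{y}\leqslant -r$, a bound independent of $\mathbf{p}$. Hence every element of the set $\{\mathbf{p}\cdot\mathbf{y}:\mathbf{p}\in Y^{\mathrm{pol}},\,\Vert\mathbf{p}\Vert=1\}$ is bounded above by $-r$, so its supremum (which exists as a real number $\leqslant -r$, bounded above by this constant) is strictly negative.

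There is really no substantial obstacle here; the lemma is a short quantitative consequence of openness plus the definition of the polar. The only point demanding care is the constructive reading of ``nonzero'': the proof must actually use $\Vert\mathbf{p}\Vert>0$ and produce a strict bound in terms of $\Vert\mathbf{p}\Vert$, rather than first deriving $\mathbf{p}\cdot\mathbf{y}\leqslant 0$ and then arguing classically that it cannot equal $0$. Choosing the maximiser $\mathbf{y}+r\mathbf{p}/\Vert\mathbf{p}\Vert$ at the outset avoids this pitfall and delivers both statements at once.
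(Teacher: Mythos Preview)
Your proof is correct and follows the same idea as the paper's: fix $r>0$ with $\overline{B}(\mathbf{y},r)\subset Y$ and perturb $\mathbf{y}$ inside this ball in a direction that strictly increases $\mathbf{p}\cdot(\,\cdot\,)$, then apply the polar inequality. The only difference is cosmetic: the paper perturbs along a coordinate axis $\mathbf{e}_i$ with $p_i\neq 0$ (obtaining the bound $-r/2\sqrt{N}$ on the unit sphere), whereas you perturb along the optimal direction $\mathbf{p}/\Vert\mathbf{p}\Vert$, which gives the sharper uniform bound $-r$ directly.
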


\begin{proof}
Let $\mathbf{p}$ be a nonzero element of $Y^{\mathrm{pol}}$; pick $1\leqslant i\leqslant N$ such that $p_i\neq0$, and fix $r>0$ such that $\overline{B}(\mathbf{y},r)\subset Y$. Then
 $$
  \mathbf{y}^\prime\equiv\mathbf{y}+(\mathrm{sign}(p_i)r)\mathbf{e}_i\in Y,
 $$
\noindent
where $\mathbf{e}_i$ is the $i$th basis vector. Hence
 $$
  \mathbf{p}\cdot\mathbf{y}<\mathbf{p}\cdot\mathbf{y}+\vert rp_i\vert=\mathbf{p}\cdot\mathbf{y}^\prime\leqslant0.
 $$
\noindent
If  $\Vert p\Vert=1$, then we may suppose that $|p_i|>1/2\sqrt{N}$; thus $\mathbf{p}\cdot\mathbf{y}<-|rp_i|<-r/2\sqrt{N}$.
\end{proof}

\bigskip
\noindent
Let $S$ be a subset of a metric space $X$. The \textbf{complement} of $S$ is 
 $$
  \mathord{\sim}S=\{x\in X:x\neq s\mbox{ for all }s\in S\}.
 $$
\noindent
If $S$ is located, then the \textbf{apartness complement} of $S$ is the set
 $$ 
  -S=\{x\in X:\rho(x,S)>0\}.
 $$ 

\begin{lemma}
\label{1'}
For each $i$ the demand function $F_i$ for $X_i$ maps into $\mathord{\sim}(Y^\circ)$.
\end{lemma}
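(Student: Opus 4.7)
The plan is to combine the quantitative bound packaged in Lemma \ref{1} with the orthogonality $\mathbf{p}\cdot F_i(\mathbf{p})=0$ from Theorem \ref{demand}. Note that $\mathord{\sim}(Y^\circ)$ is defined via pointwise inequality (apartness), so for each prevailing price $\mathbf{p}$ and each $\mathbf{y}\in Y^\circ$ I must exhibit a strictly positive lower bound on $\Vert F_i(\mathbf{p})-\mathbf{y}\Vert$, not merely contradict equality.

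First I would take $\mathbf{p}$ in the (implicit) domain of $F_i$, namely a nonzero element of $Y^{\mathrm{pol}}$ (this is the setting dictated by the set $P$ of normalised price vectors introduced just before the lemma, and the non-triviality of $\mathbf{p}$ is ensured by the normalisation $\mathbf{p}\cdot\overline{\mathbf{\xi}}=-1$). Given $\mathbf{y}\in Y^\circ$, pick $r>0$ with $\overline{B}(\mathbf{y},r)\subset Y$. Retracing the second half of the proof of Lemma \ref{1} (which in fact produces the stated uniform separation $-r/(2\sqrt{N})$ for unit price vectors) and rescaling, one obtains the quantitative inequality
$$
 \mathbf{p}\cdot\mathbf{y}\ \leqslant\ -\frac{r\Vert\mathbf{p}\Vert}{2\sqrt{N}}.
$$
Combining this with $\mathbf{p}\cdot F_i(\mathbf{p})=0$ from Theorem \ref{demand} gives
$$
 \mathbf{p}\cdot\bigl(F_i(\mathbf{p})-\mathbf{y}\bigr)\ \geqslant\ \frac{r\Vert\mathbf{p}\Vert}{2\sqrt{N}},
$$
and Cauchy--Schwarz then yields $\Vert F_i(\mathbf{p})-\mathbf{y}\Vert\geqslant r/(2\sqrt{N})>0$, which is precisely the required apartness.

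There is no substantive obstacle: the work was done in Lemma \ref{1}, where the nonempty interior of $Y$ at $\mathbf{y}$ was converted into a computable gap, and in Theorem \ref{demand}, where the boundary relation $\mathbf{p}\cdot F_i(\mathbf{p})=0$ was secured. The only point that deserves a sentence of justification is that the inequality from Lemma \ref{1}, as originally formulated for $\Vert\mathbf{p}\Vert=1$, holds in the homogeneous form displayed above for arbitrary nonzero $\mathbf{p}\in Y^{\mathrm{pol}}$, which follows immediately by applying the unit-norm case to $\mathbf{p}/\Vert\mathbf{p}\Vert$.
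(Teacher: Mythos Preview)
Your argument is correct and follows essentially the same route as the paper: combine $\mathbf{p}\cdot\mathbf{y}<0$ for $\mathbf{y}\in Y^\circ$ (Lemma~\ref{1}) with $\mathbf{p}\cdot F_i(\mathbf{p})=0$ (Theorem~\ref{demand}) to separate $F_i(\mathbf{p})$ from every interior point of $Y$. The paper phrases this as a two-line contradiction, while you unpack the apartness explicitly via Cauchy--Schwarz and the quantitative bound from Lemma~\ref{1}; the extra uniformity you invoke is harmless but not needed, since the pointwise strict inequality $\mathbf{p}\cdot\mathbf{y}<0$ already yields $\Vert F_i(\mathbf{p})-\mathbf{y}\Vert>0$ constructively.
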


\begin{proof}
Suppose that $F(\mathbf{p})\in Y^\circ$. Then, by Lemma \ref{1}, $\mathbf{p}\cdot F(\mathbf{p})<0$, which contradicts Theorem \ref{demand}. 
\end{proof}

\bigskip
\noindent
Let $C$ be a located convex subset of a Banach space $X$. Then for each $\xi\in C^\circ$ and each $z\in-C$ there exists a unique point $h(\xi,z)$ in the intersection of the \textbf{interval}
 $$
  [\xi,z]=\left\{t\xi+(1-t)z:t\in[0,1]\right\}
 $$
\noindent
and the boundary $\partial C$ of $C$; moreover, the mapping $(\xi,z)\mapsto h(\xi,z)$---the \textbf{boundary crossing map} of $C$---is pointwise continuous on $C^\circ\times-C$ \cite[Proposition 5.1.5]{BV}. The next lemma shows that for a fixed $\xi\in C^\circ$, this mapping is uniformly continuous.

\begin{lemma}
\label{2}
Let $X$ be a bounded convex subset of $\mathbf{R}^N$ and let $\mathbf{\xi}\in X^\circ$. Then the function $h:\mathbf{R}^N\rightarrow \overline{X}$ which fixes each point of $\overline{X}$ and sends $y\in\mathord{\sim}X$ to the unique intersection point of $[\mathbf{\xi},y]$ and $\partial X$ is uniformly continuous.
\end{lemma}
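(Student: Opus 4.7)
The strategy is to reduce uniform continuity of $h$ to Lipschitz continuity of the Minkowski gauge of $X$ at $\xi$. Since $\xi \in X^\circ$ and $X$ is bounded, fix $r, R > 0$ with $\overline{B}(\xi, r) \subset X \subset B(\xi, R)$. Define
$$
\mu(u) = \inf\{t > 0 : \xi + u/t \in \overline{X}\}
$$
for $u \in \mathbf{R}^N$ (with $\mu(0) = 0$); the inclusion $\overline{B}(\xi, r) \subset X$ yields $\mu(u) \leqslant \Vert u \Vert/r$, so the infimum exists constructively.

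First I would show that $\mu$ is positively homogeneous and subadditive. Homogeneity is immediate from the definition; subadditivity $\mu(u + v) \leqslant \mu(u) + \mu(v)$ follows from convexity of $X$ by writing $(u + v)/(t + s)$ as a convex combination of $u/t$ and $v/s$ for $t > \mu(u)$ and $s > \mu(v)$. Combining subadditivity with the bound $\mu(w) \leqslant \Vert w \Vert/r$ delivers the Lipschitz estimate $|\mu(u) - \mu(v)| \leqslant \Vert u - v \Vert/r$, so $\mu$ is uniformly continuous on $\mathbf{R}^N$.

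Next I would verify the explicit formula $h(y) = \xi + (y - \xi)/\max(1, \mu(y - \xi))$ by checking it against the statement's definition in both cases: $y \in \overline{X}$ forces $\mu(y - \xi) \leqslant 1$ and the formula returns $y$, while $y \in \mathord{\sim}X$ with $\rho(y, X) > 0$ forces $\mu(y - \xi) > 1$ and the formula returns the unique point of $[\xi, y] \cap \partial X$. Uniform continuity of $h$ then follows by composing uniformly continuous operations: subtraction, the Lipschitz $\mu$, and the scaling, once one observes that for $\Vert y - \xi \Vert \geqslant R$ the denominator grows at least as fast as $\Vert y - \xi \Vert/R$ and the image stays within $\overline{X}$.

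The main obstacle I anticipate is reconciling the formula with $h$'s piecewise definition: constructively we cannot dichotomise $\mathbf{R}^N$ into $\overline{X}$ and $\mathord{\sim}X$. The resolution is to treat the formula itself as the canonical extension of $h$ to all of $\mathbf{R}^N$, since it is continuous and agrees with the stated description on both $\overline{X}$ and $\mathord{\sim}X$, so any ``undecided'' $y$ still receives a well-defined image that inherits uniform continuity from $\mu$.
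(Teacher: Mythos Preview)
Your approach via the Minkowski gauge is correct and genuinely different from the paper's. The paper proceeds geometrically: after reducing to points $a, b$ on a large sphere $\partial B(0, N)$, it builds an explicit modulus $\varphi(\delta)$ by trigonometry, arguing from a worst-case picture that if $\Vert h(a) - h(b)\Vert$ were too large then the chord through $h(a)$ and $h(b)$ would meet the inner ball $B(0, r)$, forcing one of them into $X^\circ$. You instead package the geometry into the gauge $\mu$, obtain the Lipschitz bound $|\mu(u) - \mu(v)| \leqslant \Vert u - v\Vert/r$ from subadditivity, and read off a global Lipschitz constant (of order $1 + R/r$) for $h$ from the formula $h(y) = \xi + (y - \xi)/\max(1, \mu(y - \xi))$. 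This is cleaner, avoids the trigonometry, and actually yields a sharper conclusion (Lipschitz rather than merely uniformly continuous); the paper's route, by contrast, produces a concrete geometric picture and an explicit modulus one can draw. One caveat: your sentence ``$\mu(u) \leqslant \Vert u\Vert/r$, so the infimum exists constructively'' is too quick---an upper bound does not by itself guarantee a constructive infimum. You need locatedness of $X$ (implicit here anyway, since the boundary-crossing map is only defined for located convex sets) together with convexity to see that $\{t > 0 : \xi + u/t \in \overline{X}\}$ is an order-located upper ray; once that is said the infimum exists and the rest of your argument goes through, including your nice observation that the formula extends $h$ to all of $\mathbf{R}^N$ without case-splitting.
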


\begin{proof}
Without loss of generality we suppose $\mathbf{\xi}=0$. Let $N>0$ be such that $X\subset B(0,N)$ and let $r>0$ be such that $B(0,r)\subset X$. Since the function mapping a point $x\neq0$ to the unique intersection point of 
 $$
  \mathbf{R}x=\{rx:r\in\mathbf{R}\}
 $$
\noindent
and $\partial B(0,N)$ is uniformly continuous on $-B(0,r/2)$, it suffices to show that $h$ is uniformly continuous on $\partial B(0,N)$.

\bigskip
\noindent
Given $\delta>0$, set 
 \begin{eqnarray*}
  \theta & = & \mathrm{cos}^{-1}(1-(\delta^2/2N^2))\\
  \beta & = & \mathrm{cos}^{-1}(\delta/2N), \mbox{ and}\\
  \alpha & = & \mathrm{sin}^{-1}(r/N).
 \end{eqnarray*}
Define
 $$
  \varphi(\delta)=\frac{\delta|\mathrm{sin}(\beta)|}{|\mathrm{sin}(\alpha + \theta)|}.
 $$
\noindent
The function $\varphi$ is constructed as a `worst case scenario' given that $X$ contains $B(0,r)$ and is strictly contained in $B(0,N)$; see the following diagram.

\begin{center}
\includegraphics[width=8cm]{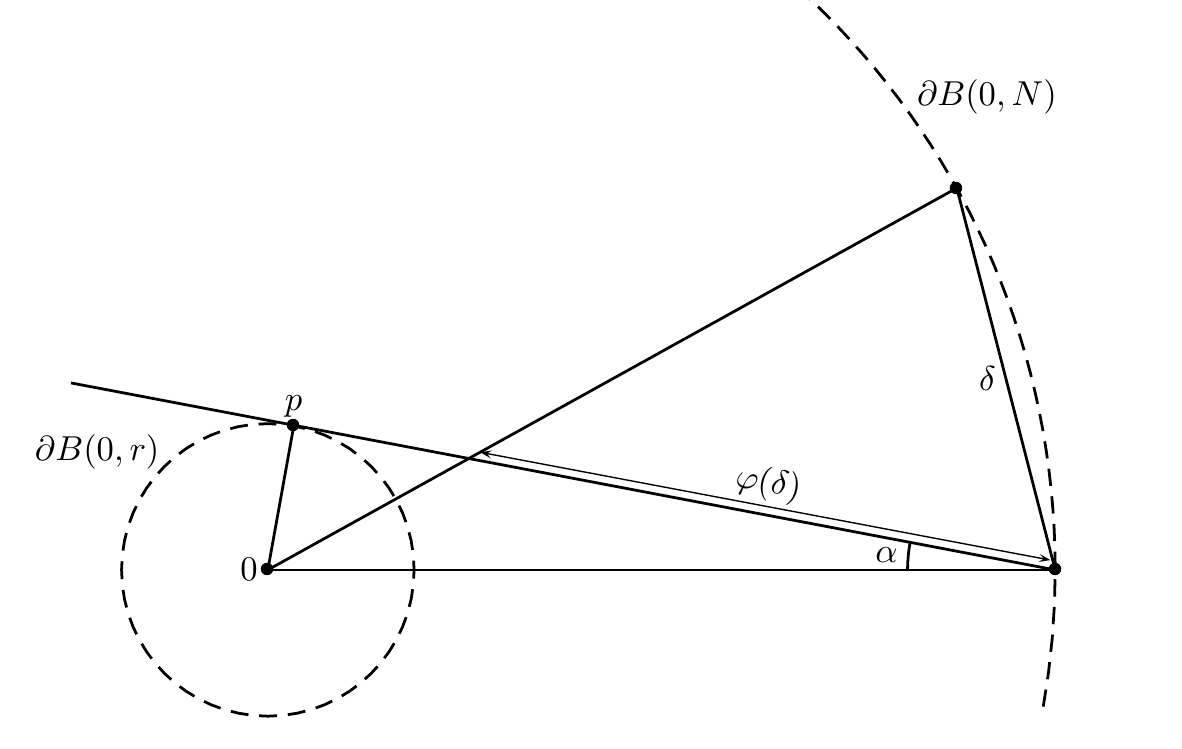}
\end{center}

\noindent
Fix $a,b\in \partial B(0,N)$ with $0<\|a-b\|<\delta$, and let $x\in[0,a]\cap X$ and $y\in[0,b]\cap X$ such that $\|x-y\|>\varphi(\delta)$; without loss of generality, $\|x\|<\|y\|$. It suffices to show that it cannot occur that both $x,y\in\partial X$, for then the assumption that $\|h(x)-h(y)\|>\varphi(\delta)$ leads to a contradiction. By the construction of $\varphi$, the unique line passing through $x$ and $y$ must intersect $B(0,r)$. It follows that 
$$
 x\in\left(\mathrm{conhull}\left(B(0,r)\cup\{y\}\right)\right)^\circ\subset X^\circ,
$$
\noindent
where $\mathrm{conhull}(S)$ is the convex hull of $S$. Hence if $\|a-b\|<\delta$, then $\|h(a)-h(b)\|\leqslant\varphi(\delta)$.

\bigskip
\noindent
It only remains to show that for each $\varepsilon>0$ we can find a $\delta>0$ such that $\varphi(\delta)<\varepsilon$. From elementary calculations we have that

\begin{align*}
 \varphi(\delta)&=\frac{\delta\sqrt{4N^2-\delta^2}}{2r(1-(\delta^2/2N^2))+2\delta\sqrt{(1-(r^2/N^2))(1-(\delta^2/2N^2))}} \\
&\leq \frac{\delta\sqrt{4N^2-\delta^2}}{2r(1-(r^2/2N^2))+2\delta(1-(r^2/N^2))} \ \ \longrightarrow\ \ 0
\end{align*}

\noindent
 as $\delta\to 0$.
\end{proof}

\begin{lemma}
\label{3}
Let $X,Y$ be convex subsets of a normed space such that $X,Y$ are both totally bounded, and $(X\cap Y)^\circ$ is inhabited. Then $X\cap Y$ is totally bounded.
\end{lemma}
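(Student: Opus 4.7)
The plan is to exploit the inhabited interior of $X\cap Y$ via a ``shrink toward $\xi$'' trick. Fix $\xi\in(X\cap Y)^\circ$ and $r>0$ with $B(\xi,r)\subset X\cap Y$. The key geometric observation I would record first is: for any convex set $C\supset B(\xi,r)$, any $z\in C$, and any $t\in(0,1)$, one has $B((1-t)z+t\xi,\,tr)\subset C$, immediate from writing any element of the ball as a convex combination of $z$ and a point of $B(\xi,r)$. Applied separately to $X$ and to $Y$, this turns mere membership of a point in $X$ (or $Y$) into the stronger statement that a slightly shrunk version of that point lies in $X$ (respectively $Y$) with a ball of room to spare.

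Given $\varepsilon>0$, pick $M>0$ bounding $\Vert x-\xi\Vert$ on $X$ (available since $X$ is totally bounded, hence bounded), choose $t\in(0,1)$ with $tM+tr/4<\varepsilon$, set $\delta=tr/4$, and take finitely enumerable $\delta$-approximations $F_X\subset X$ and $F_Y\subset Y$. Since $F_Y$ is finite, $\rho(u,F_Y)$ is computable for each $u\in F_X$, so I can form the (constructively) decidable finite subset $S=\{u\in F_X:\rho(u,F_Y)<2\delta\}$. The proposed $\varepsilon$-approximation to $X\cap Y$ is $T=\{(1-t)u+t\xi:u\in S\}$.

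Two verifications remain. First, each $u^*:=(1-t)u+t\xi$ with $u\in S$ lies in $X\cap Y$: convexity of $X$ gives $u^*\in X$; and choosing a witness $v\in F_Y$ with $\Vert u-v\Vert<2\delta$, the shrunk point $v^*=(1-t)v+t\xi$ satisfies $B(v^*,tr)\subset Y$ by the geometric observation applied to $Y$, while $\Vert u^*-v^*\Vert=(1-t)\Vert u-v\Vert<tr/2<tr$, forcing $u^*\in Y$. Second, for any $z\in X\cap Y$, choose $u\in F_X$ with $\Vert u-z\Vert<\delta$ and $v\in F_Y$ with $\Vert v-z\Vert<\delta$; then $\Vert u-v\Vert<2\delta$, so $u\in S$, and
$$\Vert z-u^*\Vert\leqslant\Vert z-u\Vert+t\Vert u-\xi\Vert<\delta+tM<\varepsilon.$$

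The obstacle I anticipate is constructive rather than geometric: one cannot in general decide whether a given $u\in F_X$ belongs to $Y$, so the natural filter ``$u\in Y$'' is unavailable. The decidable surrogate ``$u$ is close to $F_Y$'' is necessarily passed by every point of $X\cap Y$ (after approximation), but points passing it need not themselves lie in $Y$. The shrinking step is exactly the device that upgrades ``close to $Y$'' into ``inside $Y$'', at a controllable cost $tM$ in the approximation radius; the whole proof is a balancing act between this cost and the slack $tr$ created by the shrinking.
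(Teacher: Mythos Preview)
Your argument is correct and takes a genuinely different route from the paper's. The paper fixes $\xi\in(X\cap Y)^\circ$ and uses the boundary crossing map $h$ of Lemma~\ref{2}: it takes a $\delta/2$-approximation $\{y_1,\ldots,y_k\}$ of $Y$, partitions the indices into $P,Q$ according to whether $\rho(y_i,X)$ is small or not, and shows that $\{h(y_i):i\in P\}\subset X\cap Y$ is the desired $\varepsilon$-approximation; the uniform continuity of $h$ (proved in Lemma~\ref{2} only for $\mathbf{R}^N$) is what controls $\Vert y_i-h(y_i)\Vert$. You replace this retraction by the homothety $u\mapsto(1-t)u+t\xi$, and your observation $B((1-t)z+t\xi,\,tr)\subset C$ is what upgrades ``close to $Y$'' to ``inside $Y$''. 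This is more elementary---it avoids Lemma~\ref{2} altogether---and, unlike the paper's proof, works verbatim in an arbitrary normed space, matching the stated generality of the lemma. One small constructive wrinkle: the predicate $\rho(u,F_Y)<2\delta$ is not literally decidable on $F_X$, since comparison of reals is not; you should instead use cotransitivity to partition $F_X$ into those $u$ with $\rho(u,F_Y)<3\delta$ and those with $\rho(u,F_Y)>2\delta$ (exactly the device the paper uses for its $P,Q$), after which both verifications go through with $3\delta$ in place of $2\delta$ and unchanged constants.
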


\begin{proof}
Let $\xi\in (X\cap Y)^\circ$ and let $R>0$; without loss of generality $\xi\in B(0,R)$. Let $Y^\prime=Y\cap B(0,R)$ and let $h$ be the uniformly continuous function which fixes $X$ and maps each point $y$ in $-X$ to the unique point in $[\xi,z]\cap\partial X$. Fix $\varepsilon>0$ and let $\delta\in(0,\varepsilon/4)$ be such that if $\Vert y-y^\prime\Vert<\delta$, then $\Vert h(y)-h(y^\prime)\Vert<\varepsilon/4$. Let $\left\{y_1,\ldots,y_k\right\}$ be a $\delta/2$-approximation of $Y$ and partition $\{1,\ldots,k\}$ into disjoint sets $P,Q$ such that
 \begin{eqnarray*}
  i\in P & \Rightarrow & \rho(y_i,X)<\delta;\\
  i\in Q & \Rightarrow & \rho(y_i,X)>\delta/2.
 \end{eqnarray*}
\noindent
If $i\in P$, then there exists $x\in X$ such that $\rho(x,y_i)<\delta$. Then
 $$
  \Vert y_i-h(y_i)\Vert\leqslant\Vert y_i-x_i\Vert+\Vert x_i-h(y_i)\Vert<  \varepsilon/4+\varepsilon/4=\varepsilon/2
 $$
\noindent
and, since $Y$ is convex, $h(y_i)\in X\cap Y$. The set
 $$
  S=\left\{h\left(y_i\right):i\in P\right\}
 $$
\noindent
is an $\varepsilon$-approximation of $X\cap Y\cap B(0,R)=X\cap Y^\prime$: fix $z\in X\cap Y$ and pick $1\leqslant i\leqslant k$ such that 
 $$
  \Vert z-y_i\Vert<\delta/2.
 $$
\noindent
Then $i\in P$, so $h(y_i)\in S$ and 
 \begin{eqnarray*}
  \Vert z-h(y_i)\Vert & \leqslant & \Vert z-y_i\Vert+\Vert y_i-h(y_i)\Vert\\
   & < & \delta/2+\varepsilon/2 \ < \ \varepsilon.
 \end{eqnarray*}
\end{proof}

\begin{lemma}
\label{4}
$P$ is compact and convex.
\end{lemma}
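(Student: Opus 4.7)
The plan is to establish convexity and then compactness, which reduces to completeness plus total boundedness. Convexity is immediate: $Y^{\mathrm{pol}}$ is a convex cone and $\mathbf{p}\mapsto\mathbf{p}\cdot\overline{\mathbf{\xi}}$ is linear, so for $\mathbf{p},\mathbf{q}\in P$ and $t\in[0,1]$ the combination $t\mathbf{p}+(1-t)\mathbf{q}$ lies in $Y^{\mathrm{pol}}$ and pairs with $\overline{\mathbf{\xi}}$ to give $-1$. Similarly, $P$ is the intersection of the closed set $Y^{\mathrm{pol}}$ with a closed affine hyperplane, hence closed in $\mathbf{R}^N$ and thus complete. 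For boundedness I first check that $\overline{\mathbf{\xi}}\in Y^\circ$: each $\overline{\mathbf{\xi}}_i$ sits in $(X_i\cap Y)^\circ\subset Y^\circ$, so choosing $r>0$ with $B(\overline{\mathbf{\xi}}_1,r)\subset Y$ and using that the convex cone $Y$ is closed under addition gives $B(\overline{\mathbf{\xi}},r)=B(\overline{\mathbf{\xi}}_1,r)+\overline{\mathbf{\xi}}_2+\cdots+\overline{\mathbf{\xi}}_m\subset Y$. Lemma \ref{1} now supplies $c>0$ with $\mathbf{q}\cdot\overline{\mathbf{\xi}}\leqslant -c$ for every unit $\mathbf{q}\in Y^{\mathrm{pol}}$; applied to $\mathbf{p}/\|\mathbf{p}\|$ for any nonzero $\mathbf{p}\in P$ this forces $\|\mathbf{p}\|\leqslant 1/c$, so $P\subset\overline{B}(0,1/c)$.

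The substantive step is total boundedness, which constructively reduces to showing that $P$ is located in $\mathbf{R}^N$. The plan is first to obtain location of $Y^{\mathrm{pol}}$ via a constructive Moreau-style polar decomposition: since $Y$ is a located closed convex cone, Theorem 6 of \cite{BRS} supplies the nearest-point projection $P_Y(\mathbf{x})$ for each $\mathbf{x}\in\mathbf{R}^N$; the projection variational inequality $(\mathbf{x}-P_Y(\mathbf{x}))\cdot(\mathbf{y}-P_Y(\mathbf{x}))\leqslant 0$ for $\mathbf{y}\in Y$, specialised at $\mathbf{y}=0$, $\mathbf{y}=2P_Y(\mathbf{x})$, and $\mathbf{y}=P_Y(\mathbf{x})+\lambda\mathbf{z}$ for $\mathbf{z}\in Y$ and $\lambda>0$ (each of which lies in $Y$ by the cone property), yields $P_Y(\mathbf{x})\cdot(\mathbf{x}-P_Y(\mathbf{x}))=0$ together with $\mathbf{x}-P_Y(\mathbf{x})\in Y^{\mathrm{pol}}$. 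A short inner-product computation then gives $\rho(\mathbf{x},Y^{\mathrm{pol}})=\|P_Y(\mathbf{x})\|$, so $Y^{\mathrm{pol}}$ is located. To pass from location of the cone $Y^{\mathrm{pol}}$ to location of its cross-section $P$, I plan to exploit the transversality supplied by the constant $c$: every nonzero ray in $Y^{\mathrm{pol}}$ crosses the hyperplane $H=\{\mathbf{p}:\mathbf{p}\cdot\overline{\mathbf{\xi}}=-1\}$ at a uniformly bounded distance from the origin, which should allow a projection onto $Y^{\mathrm{pol}}$ to be converted into a projection onto $P$ via a uniformly continuous radial rescaling. Once $P$ is known to be closed, bounded, and located in $\mathbf{R}^N$, it is automatically totally bounded.

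The step I expect to be the real obstacle is this last passage from location of $Y^{\mathrm{pol}}$ to location of its cross-section $P$: classically it is trivial, but constructively it requires extracting a uniform modulus from the transversality constant $c$, ensuring that the rescaling map remains uniformly continuous across the cone and that small perturbations of a query point $\mathbf{x}$ produce uniformly controlled perturbations of its projection onto $P$ rather than just onto the cone.
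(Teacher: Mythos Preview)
Your treatment of convexity, closedness, and boundedness matches the paper's: it too records that closedness and convexity are straightforward, and it bounds $\Vert\mathbf{p}\Vert$ via Lemma~\ref{1} applied at $\overline{\mathbf{\xi}}$ (implicitly using $\overline{\mathbf{\xi}}\in Y^\circ$, which you spell out).

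The divergence is in how total boundedness is obtained. You aim to show that $P$ is \emph{located} in $\mathbf{R}^N$---first establishing locatedness of $Y^{\mathrm{pol}}$ via the Moreau decomposition, then pushing this down to the cross-section $P$ by the radial rescaling $\mathbf{q}\mapsto\mathbf{q}/(-\mathbf{q}\cdot\overline{\mathbf{\xi}})$---and finally invoking the fact that a bounded located subset of $\mathbf{R}^N$ is totally bounded. The paper takes a different and shorter route: it observes that $Y^{\mathrm{pol}}$ and the hyperplane $\{\mathbf{p}:\mathbf{p}\cdot\overline{\mathbf{\xi}}=-1\}$ are each locally totally bounded, and then applies Lemma~\ref{3} (the intersection lemma, built on the boundary-crossing map of Lemma~\ref{2}) to conclude directly that their intersection $P$ is locally totally bounded. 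Combined with the norm bound this yields total boundedness outright, without ever computing a distance function to $P$.

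Your Moreau argument for $Y^{\mathrm{pol}}$ is correct, but the passage to $P$---which you yourself flag as the ``real obstacle''---is only sketched: making the rescaling uniformly continuous and then converting $\rho(\mathbf{x},Y^{\mathrm{pol}})$ into $\rho(\mathbf{x},P)$ requires restricting attention to an annulus in $Y^{\mathrm{pol}}$ and arguing that nearest points to $P$ can always be sought there, which is doable but not immediate. The paper's Lemma~\ref{3} is precisely the device that sidesteps this difficulty: the boundary-crossing map already packages the needed uniform modulus, so finite $\varepsilon$-approximations to $P$ are produced directly rather than inferred from locatedness.
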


\begin{proof}
It is straightforward to show that $P$ is closed and convex; it just remains to show that $P$ is totally bounded. By the bilinearity of the mapping $(\mathbf{p},\mathbf{x})\mapsto\mathbf{p}\cdot\mathbf{x}$, both $Y^{\mathrm{pol}}$ and
 $$
  \{\mathbf{p}\in\mathbf{R}^N:\mathbf{p}\cdot\overline{\xi}=-1\}
 $$
\noindent
are locally totally bounded. Since $P$ is the intersection of these two sets, $P$ is locally totally bounded by Lemma \ref{3}. It remains to show that $P$ is bounded: by Lemma \ref{1} 
 $$
  M=\sup\{\mathbf{p}\cdot\mathbf{y}:\mathbf{p}\in Y^{\mathrm{pol}}, \Vert p\Vert=1\}<0.
 $$
\noindent
Suppose that there exists $\mathbf{p}\in P$ such that $\Vert \mathbf{p}\Vert>-1/M$. Then $\mathbf{p}/\Vert\mathbf{p}\Vert\in Y^\mathrm{pol}$ and $(\mathbf{p}/\Vert\mathbf{p}\Vert)\cdot\overline{\xi}=-1/\Vert\mathbf{p}\Vert>M$---a contradiction.
\end{proof}

\begin{lemma}
\label{5}
For each $\mathbf{y}\in Y$ and each $r>0$ there exists $\mathbf{p}\in P$ such that $\mathbf{p}\cdot\mathbf{y}>-r$.
\end{lemma}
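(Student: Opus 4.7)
The plan is to construct $\mathbf p\in P$ as the normalized direction of an approximate supporting hyperplane to $Y$ at a point near $\mathbf y$. The one constructive tool for producing such supporting directions to a located closed convex set is projection, so I would pick $\mathbf y^\prime\in\mathbf R^N$ with $\rho(\mathbf y^\prime,Y)>0$ but $\mathbf y^\prime$ close to $\mathbf y$, take its projection $\mathbf y_0$ onto $Y$ (this exists by Theorem~6 of~\cite{BRS}, since $Y$ is a located closed convex set), and set $\mathbf q=\mathbf y^\prime-\mathbf y_0$. Standard facts about projection onto a closed convex cone then give $\mathbf q\in Y^{\mathrm{pol}}$ and $\mathbf q\cdot\mathbf y_0=0$.

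Normalization uses Lemma~\ref{1}: its ``moreover'' clause, applied at the interior point $\overline{\xi}\in Y^\circ$, yields a constant $c>0$ such that $\mathbf p\cdot\overline{\xi}\leqslant-c\|\mathbf p\|$ for every $\mathbf p\in Y^{\mathrm{pol}}$. In particular $\mathbf q\cdot\overline{\xi}<0$, so $\mathbf p:=\mathbf q/|\mathbf q\cdot\overline{\xi}|$ is a well-defined element of $P$. Since $\mathbf q\cdot\mathbf y_0=0$ we compute $\mathbf p\cdot\mathbf y=\mathbf q\cdot(\mathbf y-\mathbf y_0)/|\mathbf q\cdot\overline{\xi}|$; Cauchy--Schwarz combined with $|\mathbf q\cdot\overline{\xi}|\geqslant c\|\mathbf q\|$ then gives $|\mathbf p\cdot\mathbf y|\leqslant c^{-1}\|\mathbf y-\mathbf y_0\|$. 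To obtain $\mathbf p\cdot\mathbf y>-r$ it therefore suffices to arrange that $\mathbf y^\prime$ is chosen so that $\|\mathbf y-\mathbf y_0\|<cr$.

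All the real work lives in the first step, and this is the main obstacle: exhibiting an $\mathbf y^\prime$ which is simultaneously (a) apart from $Y$ (so the projection is genuine and $\mathbf q\neq 0$) and (b) close enough to $\mathbf y$ that $\mathbf y_0$ lies within $cr$ of $\mathbf y$. Hypothesis~(v) ensures that every strictly positive vector $\mathbf u\in\mathbf R_{>0}^N$ is outside $Y$ (since $Y\cap\mathbf R_{\geqslant0}^N=\{0\}$), and by the cone structure and closedness of $Y$ one sees that $\mathbf y+\lambda\mathbf u$ becomes apart from $Y$ for all sufficiently large $\lambda$. A constructive intermediate-value-style argument applied to the uniformly continuous function $\lambda\mapsto\rho(\mathbf y+\lambda\mathbf u,Y)$ should then let one find a $\lambda$ for which $\mathbf y+\lambda\mathbf u$ sits only a small positive distance from $Y$, forcing its projection back onto $Y$ to lie within the required tolerance of $\mathbf y$. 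Carrying this balancing out cleanly---especially handling cases in which $\mathbf y$ sits deep inside $Y^\circ$, where small perturbations in arbitrary directions remain inside $Y$---is where I expect the subtlety of the proof to lie.
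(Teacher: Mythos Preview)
Your projection-and-normalize scheme is sound, and for $\mathbf{y}\in\partial Y$ the difficulty you flag in your last paragraph simply disappears: since $\mathbf{y}$ lies in the closure of $-Y$, you can pick $\mathbf{y}'\in -Y$ with $\|\mathbf{y}'-\mathbf{y}\|<cr/2$ directly, and then $\|\mathbf{y}-\mathbf{y}_0\|\leqslant\|\mathbf{y}-\mathbf{y}'\|+\|\mathbf{y}'-\mathbf{y}_0\|<cr$ because $\mathbf{y}\in Y$ forces $\|\mathbf{y}'-\mathbf{y}_0\|=\rho(\mathbf{y}',Y)\leqslant\|\mathbf{y}'-\mathbf{y}\|$. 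No appeal to hypothesis~(v) or to any IVT-style search is needed.

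The paper in fact only treats the boundary case: its proof opens with ``Fix $\mathbf{y}\in\partial Y$'', and that is all that is ever used downstream (the later lemma on $g_r$ applies the result only to $\mathbf{z}\in\partial Y$). Your struggle with deep interior points is therefore chasing a phantom---indeed the statement as written is \emph{false} for arbitrary $\mathbf{y}\in Y$. With $N=2$, $Y$ the closed third quadrant and $\overline{\xi}=(-1,-1)$, one computes $P=\{(t,1-t):t\in[0,1]\}$ and $\mathbf{p}\cdot(-2,-2)=-2$ for every $\mathbf{p}\in P$. So the programme you sketch for interior $\mathbf{y}$ cannot succeed, and the obstacle you sensed is real but belongs to the \emph{statement}, not to your method.

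Granted the restriction to $\partial Y$, your argument is correct but takes a genuinely different route from the paper's. Rather than constructing a single $\mathbf{p}$ by projection, the paper uses the compactness of $P$ (Lemma~\ref{4}) to form $\sup\{\mathbf{p}\cdot\mathbf{y}:\mathbf{p}\in P\}$ and shows by contradiction that this supremum equals~$0$: were it negative, a small perturbation of $\mathbf{y}$ into $-Y$ would give $\mathbf{z}\in-Y$ with $\mathbf{p}\cdot\mathbf{z}<0$ for all $\mathbf{p}\in P$, whence $\mathbf{z}\in P^{\mathrm{pol}}=(Y^{\mathrm{pol}})^{\mathrm{pol}}=Y$ by the bipolar proposition, which is absurd. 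Your approach is more explicitly constructive---it hands you the witness $\mathbf{p}$ outright---and avoids the bipolar machinery entirely, at the price of the Cauchy--Schwarz estimate and the normalization bound from Lemma~\ref{1}; the paper's argument is shorter once that proposition is in hand.
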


\begin{proof}
Fix $\mathbf{y}\in\partial Y$. Suppose that
 $$
  \sup\{\mathbf{p}\cdot\mathbf{y}:\mathbf{p}\in P\}<0;
 $$
\noindent
this supremum exists since $P$ is totally bounded and $(\mathbf{p},\mathbf{x})\mapsto\mathbf{p}\cdot\mathbf{x}$ is uniformly continuous. Then there exists $\mathbf{z}\in -Y$ such that $\mathbf{p}\cdot\mathbf{z}<0$ for all $\mathbf{p}\in P$. But
 $$
  \mathbf{z}\in P^{\mathrm{pol}}=(Y^{\mathrm{pol}})^{\mathrm{pol}}=Y.
 $$
\noindent
This contradiction ensures that $\sup\{\mathbf{p}\cdot\mathbf{y}:\mathbf{p}\in P\}=0$, from which the result follows.
\end{proof}

\bigskip
\noindent
For the proof of Theorem \ref{cMcKenzie}, we need a constructive version of Kakutani's fixed point theorem. Since Kakutani's fixed point theorem is a generalisation of Brouwer's fixed point theorem, it is not valid in constructive mathematics. Before introducing the constructively valid form of Kakutani's fixed point theorem we shall use, we require several definitions. We use $\mathcal{P}^*(X)$ to denote the class of nonempty located subsets of a set $X$. A set valued mapping on $X$ is a function from $X$ into $\mathcal{P}^*(X)$; the graph of a set valued mapping $\Phi$ from $X$ into $Y$ is the subset
 $$
  G(\Phi)=\bigcup_{x\in X}\{x\}\times \Phi(x)
 $$
\noindent
of $X\times Y$. A set valued mapping $\Phi$ on a metric space $X$ is said to be \textbf{weakly approximable} if for each $\varepsilon>0$, there exists
 \begin{itemize}
  \item[$\blacktriangleright$] a positive real number $\delta$,
  \item[$\blacktriangleright$] a $\delta/2$-approximation $S$ of $X$, and
  \item[$\blacktriangleright$] a function $\Phi^\prime$ from $S$ into $\mathcal{P}^*(X)$ with $G(\Phi^\prime)\subset G(\Phi)$,
 \end{itemize}
\noindent
such that if $x,x^\prime\in S$, $\Vert x-x^\prime\Vert<\delta$, $u\in \Phi^\prime(x)$, $u^\prime\in \Phi^\prime(x^\prime)$, and $t\in[0,1]$, then 
 $$
  \rho\left(\left(z_t,u_t\right),G(\Phi)\right)<\varepsilon,
 $$
\noindent
where $z_t=ty+(1-t)y^\prime$ and $u_t=tu+(1-t)u^\prime$. If $\Phi^\prime$ can be chosen independently of $\varepsilon$, in which case $S$ is a dense subset of $X$, then $\Phi$ is said to be \textbf{weakly approximable with respect to} $\Phi^\prime$. The following is Theorem 10 of \cite{mh}.

\begin{theorem}
\label{Kak}
 Let $X$ be a compact convex subset of $\mathbf{R}^N$ and let $\{\Phi_r:r\in\mathbf{R}^+\}$ be a family of set valued mappings on $X$ such that
  \begin{itemize}
   \item[(i)] for all $r,r^\prime\in\mathbf{R}$, if $r<r^\prime$, then there exists $\delta>0$ such that 
    $$
     \left\{x\in\mathbf{R}^N: \exists_{z\in G\left(\Phi_r\right)}\rho(x,z)<\delta\right\}\subset G\left(\Phi_{r^\prime}\right);
    $$
   \item[(ii)] $\Phi_r$ is weakly approximable for each $r\in\mathbf{R}^+$.
  \end{itemize}
 \noindent
 Then for each $r\in\mathbf{R}^+$, there exists $x\in X$ such that $x\in\Phi_r(x)$.
 \end{theorem}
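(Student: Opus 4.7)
My plan is to adapt the classical proof of Kakutani's fixed point theorem, using an approximate Brouwer fixed point theorem (via Sperner's lemma, which is constructively valid) in place of the exact Brouwer theorem, and to absorb the unavoidable approximation error into the relaxation provided by the family $\{\Phi_r\}$: an approximate fixed point of $\Phi_{r_0}$ for a strictly smaller parameter $r_0<r$ will, by the graph-thickening hypothesis (i), promote to an exact fixed point of $\Phi_r$.

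Fix $r\in\mathbf{R}^+$. First I would choose an intermediate $r_0\in(0,r)$ and, by (i), obtain $\delta>0$ so that every point of $X\times X$ within distance $\delta$ of $G(\Phi_{r_0})$ lies in $G(\Phi_r)$. Next, applying weak approximability of $\Phi_{r_0}$ with tolerance $\varepsilon=\delta/3$, I obtain a mesh $\delta'>0$, a finite $\delta'/2$-approximation $S=\{s_1,\ldots,s_k\}$ of $X$, and a map $\Phi'_{r_0}\colon S\to\mathcal{P}^*(X)$ with $G(\Phi'_{r_0})\subset G(\Phi_{r_0})$ enjoying the stated binary convex-combination clause. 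Selecting one value $g(s_i)\in\Phi'_{r_0}(s_i)$ for each $i$, I construct a triangulation of $X$ of mesh less than $\delta'$ whose vertices include the $s_i$ (refining $S$ if necessary by a constructive subdivision of a covering grid and projecting back into $X$ via the located convexity of $X$), and extend $g$ affinely on each simplex to obtain a continuous single-valued $f\colon X\to X$.

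To this $f$ I apply the constructive (approximate) Brouwer fixed point theorem via Sperner's lemma, producing $x^*\in X$ with $\Vert x^*-f(x^*)\Vert<\delta/3$. Writing $x^*=\sum_j t_j s_{i_j}$ barycentrically in a simplex containing $x^*$, we have $f(x^*)=\sum_j t_j g(s_{i_j})$, so $(x^*,f(x^*))$ is an iterated binary convex combination of pairs $(s_{i_j},g(s_{i_j}))\in G(\Phi'_{r_0})$ coming from vertices at mutual distance less than $\delta'$. A finite induction on the dimension of the simplex, each step invoking the binary convex-combination clause in weak approximability, places $(x^*,f(x^*))$ within $2\delta/3$ of $G(\Phi_{r_0})$ (after choosing $\varepsilon$ and the mesh small enough to keep the compounded error under control). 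Combined with $\Vert x^*-f(x^*)\Vert<\delta/3$, the pair $(x^*,x^*)$ then lies within $\delta$ of $G(\Phi_{r_0})$, so (i) yields $(x^*,x^*)\in G(\Phi_r)$, i.e.\ $x^*\in\Phi_r(x^*)$.

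The main obstacle is the last inductive step: weak approximability supplies a bound only for binary convex combinations, whereas the Sperner-style approximate fixed point appears as a convex combination of up to $N{+}1$ vertex values. Carefully bookkeeping the order in which binary combinations are formed, and choosing the tolerance $\varepsilon$ and the triangulation mesh small enough so that the compounded error stays below $2\delta/3$, is the delicate part of the argument. A secondary difficulty is the constructive production of a triangulation whose vertex set is compatible with the finite sample $S$ furnished by weak approximability; this is handled, as above, by refining a standard grid triangulation and using located convexity to round vertices into $S$ without exceeding the permitted mesh.
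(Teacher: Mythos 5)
A preliminary remark: the paper does not prove this statement itself --- it imports it as Theorem 10 of \cite{mh} --- so your proposal can only be judged on its own merits. Its overall architecture (approximate Kakutani via a piecewise affine selection and a constructive approximate Brouwer/Sperner argument, then using hypothesis (i) to promote a point whose pair $(x^*,x^*)$ is within $\delta$ of $G(\Phi_{r_0})$ to an exact fixed point of $\Phi_r$) is certainly the natural one, and the final promotion step is a correct use of (i).

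The genuine gap is the step you yourself flag as ``delicate'': reducing the $(N{+}1)$-fold barycentric combination $(x^*,f(x^*))=\sum_j t_j\,(s_{i_j},g(s_{i_j}))$ to iterated binary combinations. The weak approximability clause applies only to pairs $(x,u)$, $(x',u')$ with $x,x'\in S$, $u\in\Phi'(x)$, $u'\in\Phi'(x')$ and $\Vert x-x'\Vert<\delta'$; its conclusion is merely that the combined pair lies within $\varepsilon$ of $G(\Phi_{r_0})$. After one application, the intermediate pair is neither over a sample point of $S$ nor of the form $(z,\,u)$ with $u\in\Phi'(z)$, and $G(\Phi_{r_0})$ has no convexity or segment property of its own, so the clause simply cannot be invoked again --- the obstruction is not an accumulation of errors that careful bookkeeping and a small mesh can control, but the inapplicability of the hypothesis to the intermediate points. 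Nor does the family structure rescue the induction as you describe it: if the intermediate pair lands in $G(\Phi_{r_1})$ for some $r_0<r_1<r$ via (i), the weak approximability data of $\Phi_{r_1}$ comes with its own sample set $S_1$ and its own $\Phi'_1$, and there is no reason the intermediate point should be a sample point or carry a value of $\Phi'_1$. A secondary problem sits even at the first step: if you triangulate with small mesh and round each vertex to a nearest element of $S$, two rounded vertices of one simplex are only guaranteed to be within (mesh $+\ \delta'$) of each other, which exceeds the $\delta'$ required by the clause, so the binary property need not apply to them at all. (It is telling that in the paper's actual application, Lemma \ref{7}, the comparison map $g_{r/2}$ is defined on a dense set and the combination lands \emph{exactly} in $G(g_r)$, and the same computation there works for arbitrary finite convex combinations; a correct proof of the abstract theorem must exploit something of this kind, or a stronger form of approximability, rather than the binary clause alone.) As it stands, the central step of your argument is not justified by the definition given in the paper.
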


\noindent
The proof of the following simple lemma is left to the reader.

\begin{lemma}
\label{6}
The composition of a weakly approximable mapping with a uniformly continuous function is weakly approximable.
\end{lemma}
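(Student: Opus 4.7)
The strategy is to transport the weak-approximability data of $\Phi$ through the uniformly continuous $f$, tightening parameters via $f$'s modulus of continuity. Writing $\Psi := \Phi \circ f$ for the pre-composition (so $\Psi(x) = \Phi(f(x))$; the post-composition $f \circ \Phi$ is analogous, with the uniform continuity of $f$ applied in the second coordinate of the graph), given $\varepsilon > 0$ I would first apply the weak approximability of $\Phi$ with parameter $\varepsilon/2$ to obtain $\delta_0 > 0$, a $\delta_0/2$-approximation $S_0$ of $X$, and a witness map $\Phi' : S_0 \to \mathcal{P}^*(X)$ with $G(\Phi') \subset G(\Phi)$. Using uniform continuity of $f$, I would pick $\delta \in (0, \varepsilon/2)$ such that $\|x - x'\| < \delta$ implies $\|f(x) - f(x')\| < \delta_0/2$. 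Take $S$ to be a $\delta/2$-approximation of $X$ and, for each $x \in S$, choose $y_x \in S_0$ with $\|y_x - f(x)\| < \delta_0/2$, defining $\Psi'(x) := \Phi'(y_x)$.

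The required graph-closeness estimate for $\Psi$ then drops out by a triangle-inequality chase. If $x, x' \in S$ are within $\delta$, the triangle inequality gives $\|y_x - y_{x'}\| < \delta_0$, so the weak approximability of $\Phi$ places the convex combination $(t y_x + (1-t) y_{x'},\, t u + (1-t) u')$ within $\varepsilon/2$ of some $(\tilde{z}, \tilde{u}) \in G(\Phi)$, for any $u \in \Phi'(y_x)$, $u' \in \Phi'(y_{x'})$, and $t \in [0,1]$. The uniform continuity of $f$ then bridges the remaining $\varepsilon/2$ gap between $f(z_t)$ and $ty_x + (1-t)y_{x'}$, producing a point of $G(\Psi)$ within $\varepsilon$ of $(z_t, u_t)$.

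The one step that I expect to be the main (mild) obstacle is securing the inclusion $G(\Psi') \subset G(\Psi)$: as defined, $\Psi'(x) = \Phi'(y_x)$ is a subset of $\Phi(y_x)$ rather than of $\Phi(f(x)) = \Psi(x)$. The clean fix is to refine the construction of $S$ so that $f(x) \in S_0$ for each $x \in S$ (allowing us to take $y_x = f(x)$): one enumerates the finite set $S_0 = \{y_1, \ldots, y_n\}$, discards any $y_i$ lying more than $\delta_0/2$ from the range of $f$, and for each surviving $y_i$ selects $x_i \in X$ with $f(x_i)$ sufficiently close to $y_i$; the resulting set, augmented if necessary by a generic $\delta/2$-approximation of $X$, is compatible with $S_0$. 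With this adjustment the inclusion holds and the two estimates above combine without fuss to give the required weak-approximability data for $\Psi$, justifying the author's decision to leave the proof to the reader.
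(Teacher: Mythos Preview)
The paper leaves this proof to the reader, so there is no argument to compare against directly. Your overall strategy---pulling back the approximability data of $\Phi$ through the modulus of $f$---is the natural one, and you correctly isolate the graph-inclusion condition $G(\Psi')\subset G(\Psi)$ as the delicate point. However, both your fix for that condition and your final estimate contain gaps.

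On the graph inclusion: selecting $x_i$ with $f(x_i)$ merely \emph{close} to $y_i$ does not place $f(x_i)$ in the finite set $S_0$, so $\Phi'(f(x_i))$ remains undefined and you cannot take $y_x=f(x)$ as you claim. What rescues the situation in the paper's actual use of the lemma is that $\Phi=g_r$ is weakly approximable \emph{with respect to} $g_{r/2}$, and $g_{r/2}$ is defined on the entire domain (not just a finite approximation). One then simply sets $\Psi'=g_{r/2}\circ f$, and $G(\Psi')\subset G(\Psi)$ is automatic. This is almost certainly the situation the authors have in mind when they call the lemma ``simple.''

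The more substantive gap is your final step. From $(\tilde z,\tilde u)\in G(\Phi)$ with $\tilde z$ near $ty_x+(1-t)y_{x'}$, together with $f(z_t)$ near that same point, you assert that one can ``produce a point of $G(\Psi)$'' within $\varepsilon$ of $(z_t,u_t)$. But a point of $G(\Psi)$ is a pair $(w,v)$ with $v\in\Phi(f(w))$: you have neither a $w$ with $f(w)=\tilde z$, nor any continuity hypothesis on $\Phi$ that would let you transport $\tilde u\in\Phi(\tilde z)$ to an element of $\Phi(f(z_t))$. Weak approximability by itself supplies neither ingredient. In the paper's application this obstacle vanishes because the calculation in Lemma~\ref{7} yields the \emph{exact} membership $\mathbf p_t\in g_r(\,\cdot\,)$ rather than mere $\varepsilon$-proximity, and the discrepancy between $tf(x)+(1-t)f(x')$ and $f(z_t)$ (which is at most $\delta_0$ by your own triangle-inequality argument) can be absorbed into the parameter---start from $g_{r/4}$ instead of $g_{r/2}$ and choose $\delta_0$ so that $\|\mathbf p_t\|\,\delta_0<r/2$, giving $\mathbf p_t\in g_r(f(z_t))$ and hence $(z_t,\mathbf p_t)\in G(\Psi)$ on the nose. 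That trick is particular to the family $\{g_r\}$ and is what makes the reader's exercise genuinely routine; your argument, as written, does not supply an analogous mechanism for a general weakly approximable $\Phi$.
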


\begin{lemma}
\label{7}
For a fixed $r>0$ and for each $\mathbf{z}\in\partial Y$, define 
 $$
  g_r(\mathbf{z})=\left\{\mathbf{p}\in P:\mathbf{p}\cdot\mathbf{z}>-r\right\}.
 $$
\noindent
Then $g_r(\mathbf{z})$ is inhabited and located for each $r>0$, and $g_r$ is weakly approximable. 
\end{lemma}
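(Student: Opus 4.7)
The plan is to establish the three claims in turn: inhabitedness is an immediate consequence of Lemma \ref{5}, located-ness follows from the total boundedness of $P$ together with the uniform continuity of $\mathbf{p}\mapsto\mathbf{p}\cdot\mathbf{z}$, and weak approximability will be exhibited by a ``buffered'' selection $g_r'$ obtained by tightening the defining inequality using the boundedness of $P$.

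First I would take $\mathbf{z}\in\partial Y$. Since $\partial Y\subset Y$, Lemma \ref{5} immediately gives $\mathbf{p}\in P$ with $\mathbf{p}\cdot\mathbf{z}>-r$, so $g_r(\mathbf{z})$ is inhabited. For located-ness, note that $g_r(\mathbf{z})=P\cap H_{\mathbf{z}}^{+}$, where $H_{\mathbf{z}}^{+}=\{\mathbf{p}:\mathbf{p}\cdot\mathbf{z}>-r\}$. Since $P$ is totally bounded (Lemma \ref{4}) and $\mathbf{p}\mapsto\mathbf{p}\cdot\mathbf{z}$ is uniformly continuous on $P$, one can compute $\rho(\mathbf{q},g_r(\mathbf{z}))$ for arbitrary $\mathbf{q}$ by combining a sufficiently fine finite approximation of $P$ with the observation that, by convexity and inhabitedness of $g_r(\mathbf{z})$, the distance to $g_r(\mathbf{z})$ agrees with the distance to its closure $P\cap\{\mathbf{p}:\mathbf{p}\cdot\mathbf{z}\geqslant-r\}$ (any point in the closure is a limit of convex combinations with a fixed inhabitant $\mathbf{p}_0\in g_r(\mathbf{z})$).

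For weak approximability, set $R=\sup_{\mathbf{p}\in P}\|\mathbf{p}\|$, which is finite by Lemma \ref{4}. Given $\varepsilon>0$, I choose $\delta>0$ with $\delta<\min(\varepsilon,\,r/(2R))$, take $S=\partial Y$ (which is trivially a $\delta/2$-approximation of itself), and define
$$
g_r'(\mathbf{z})=\{\mathbf{p}\in P:\mathbf{p}\cdot\mathbf{z}\geqslant-r+2R\delta\}.
$$
By Lemma \ref{5} applied with the positive value $r-2R\delta$, $g_r'(\mathbf{z})$ is inhabited; located-ness of $g_r'(\mathbf{z})$ is by the same argument as for $g_r(\mathbf{z})$; and $G(g_r')\subset G(g_r)$ is immediate. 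The crux is the distance estimate: for $\mathbf{z},\mathbf{z}'\in S$ with $\|\mathbf{z}-\mathbf{z}'\|<\delta$, $\mathbf{p}\in g_r'(\mathbf{z})$, $\mathbf{p}'\in g_r'(\mathbf{z}')$, and $t\in[0,1]$, I would use $\mathbf{p}'\cdot\mathbf{z}\geqslant\mathbf{p}'\cdot\mathbf{z}'-R\|\mathbf{z}-\mathbf{z}'\|\geqslant-r+R\delta$ to conclude $\mathbf{p}_t\cdot\mathbf{z}\geqslant-r+R\delta>-r$, so $(\mathbf{z},\mathbf{p}_t)\in G(g_r)$. Hence
$$
\rho\bigl((\mathbf{z}_t,\mathbf{p}_t),G(g_r)\bigr)\leqslant\|\mathbf{z}_t-\mathbf{z}\|=(1-t)\|\mathbf{z}'-\mathbf{z}\|<\delta<\varepsilon.
$$

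The main obstacle I anticipate is a clean constructive verification of located-ness: because $P$ sits in the affine hyperplane $\{\mathbf{p}:\mathbf{p}\cdot\overline{\xi}=-1\}$, Lemma \ref{3} does not apply directly (the intersection $g_r(\mathbf{z})$ has empty $\mathbf{R}^{N}$-interior). I would handle this by either relativising the argument to the containing hyperplane, or by computing distances directly from a fine approximation of $P$ together with the ``buffering'' idea already used for $g_r'$ to pass safely across the strict inequality $\mathbf{p}\cdot\mathbf{z}>-r$.
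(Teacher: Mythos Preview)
Your proposal is correct and follows essentially the same ``buffering'' strategy as the paper: inhabit via Lemma~\ref{5}, locate via total boundedness of $P$ and uniform continuity of the dot product (the paper simply cites \cite[Theorem~(4.9), p.~98]{BB} here), and for weak approximability tighten the defining inequality so that convex combinations remain in $G(g_r)$. The only notable differences are that the paper uses the fixed restriction $\Phi'=g_{r/2}$ (obtaining the slightly stronger ``weakly approximable with respect to $g_{r/2}$'') and verifies $\mathbf{p}_t\cdot\mathbf{z}_t>-r$ at the interpolated point directly, whereas you use an $\varepsilon$-dependent buffer $g_r'$ and land at $(\mathbf{z},\mathbf{p}_t)\in G(g_r)$ before estimating $\|\mathbf{z}_t-\mathbf{z}\|<\varepsilon$; your version has the virtue of not needing $\mathbf{z}_t\in\partial Y$, which the paper's computation tacitly sidesteps.
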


\begin{proof}
That $g_r(\mathbf{z})$ is inhabited for each $\mathbf{z}$ follows from Lemma \ref{5}. Fix $\varepsilon>0$ and let $\delta>0$ be such that for all $\mathbf{z},\mathbf{z}^\prime\in\mathbf{R}^N$, if $\Vert \mathbf{z}-\mathbf{z}^\prime\Vert<\delta$, then $\Vert \mathbf{p}\cdot \mathbf{z}-\mathbf{p}\cdot \mathbf{z}^\prime\Vert<r/2$ for all $\mathbf{p}\in P$---such a $\delta$ exists since the mapping $(\mathbf{p},\mathbf{x})\mapsto\mathbf{p}\cdot\mathbf{x}$ is uniformly continuous and $P$ is totally bounded. Let $\mathbf{z,z}^\prime\in\mathbf{R}^N$ be such that $\Vert \mathbf{z}-\mathbf{z}^\prime\Vert<\delta$ and let $\mathbf{p}\in g_{r/2}(\mathbf{z})$ and $\mathbf{p}^\prime\in g_{r/2}(\mathbf{z}^\prime)$. For each $t\in[0,1]$, let $\mathbf{p}_t=t\mathbf{p}+(1-t)\mathbf{p}^\prime$ and $\mathbf{z}_t=t\mathbf{z}+(1-t)\mathbf{z}^\prime$. Then for all $t\in[0,1]$ we have
 \begin{eqnarray*}
  \mathbf{p}_t\cdot\mathbf{z}_t & = & (t\mathbf{p}+(1-t)\mathbf{p}^\prime)\cdot(t\mathbf{z}+(1-t)\mathbf{z}^\prime)\\
   & = & t^2\mathbf{p}\cdot\mathbf{z}+t(1-t)(\mathbf{p}\cdot\mathbf{z}^\prime+\mathbf{p}^\prime\cdot\mathbf{z})+(1-t)^2\mathbf{p}^\prime\cdot\mathbf{z}^\prime\\
  & > & -t^2r/2-2t(1-t)r-(1-t)^2r/2\ =\ -r.
 \end{eqnarray*}
\noindent
Hence $g_r$ is weakly approximable with respect to $g_{r/2}$. That $g_r(\mathbf{z})$ is located for each $\mathbf{z}\in \partial Y$ follows from Theorem (4.9) on page 98 of \cite{BB}, and the uniform continuity of the mapping $\mathbf{p}\mapsto\mathbf{p}\cdot\mathbf{y}$ on $P$.
\end{proof}

\bigskip
\noindent
We now have the \textbf{proof of Theorem \ref{cMcKenzie}:}

\bigskip

\begin{proof}
Let $F_i$ be the demand function for the $i$th consumer and let
 $$
  F=\sum_{i=1}^mF_i.
 $$
\noindent
Fix $\varepsilon>0$ and let $\delta>0$ be such that for all $\mathbf{p}\in P$, if $\Vert \mathbf{x}-\mathbf{x}^\prime\Vert<\delta$, then $\Vert \mathbf{p}\cdot\mathbf{x}-\mathbf{p}\cdot\mathbf{x}^\prime\Vert<\varepsilon/2$. Set 
 $$
  m=\min\left\{\frac{\varepsilon}{2},
  \frac{\delta}{\sup\left\{\left\Vert\overline{\mathbf{\xi}}-\mathbf{\eta}\right\Vert:\eta\in F(P)\right\}}\right\}.
 $$
\noindent
For each $r>0$, define a set valued mapping $\Phi_r$ on $P$ by
 $$
  \Phi_r=g_{r}\circ h\circ F,
 $$
\noindent
where $h,g_r$ are as in Lemma \ref{2} (for $X=\sum_{i=1}^mX_i$) and Lemma \ref{7} respectively; $\Phi_r$ is well defined by Lemma \ref{1'}. By Lemmas \ref{2},\ref{4},\ref{6},\ref{7} and Theorem \ref{demand}, $\Phi_r$ is approximable for each $r>0$ and $P$ is compact and convex. Using Theorem \ref{Kak}, construct $\mathbf{p}\in P$ such that 
 $$
  \mathbf{p}\in g_m\circ h\circ F(\mathbf{p}).
 $$
\noindent
Set $\mathbf{\xi}_i=F_i(\mathbf{p})$ for each $i$, and set $\mathbf{\eta}=F(\mathbf{p})$. Then, by definition, the $\mathbf{\xi}_i$ satisfy condition \textbf{E1}, and $\mathbf{\eta}$ satisfies \textbf{E3}. Pick $t\in[0,1)$ such that 
 $$
  \mathbf{\zeta}\equiv h(F(\mathbf{p}))=t\overline{\mathbf{\xi}}+(1-t)\mathbf{\eta}.
 $$
\noindent
Since $\mathbf{p}\in g_m(\mathbf{\zeta})$ and $\mathbf{\eta}\in Y$,
 $$ 
  -m<\mathbf{p}\cdot\mathbf{\zeta}=t\mathbf{p}\cdot\overline{\mathbf{\xi}}+(1-t)\mathbf{p}\cdot\mathbf{\eta}\leqslant -t,
 $$
\noindent
so $t<m$; whence $\Vert \mathbf{\zeta}-\mathbf{\eta}\Vert<\delta$. By our choice of $\delta$, it follows that $\Vert \mathbf{p}\cdot\mathbf{\eta}-\mathbf{p}\cdot\mathbf{\zeta}\Vert<\varepsilon/2$. Thus $\mathbf{p}\cdot\mathbf{\eta}>-\varepsilon$, so \textbf{AE} is satisfied. 
\end{proof}

\bigskip
\noindent
With the help of \textbf{weak K\"{o}nig's lemma}
 \begin{quote}
  \textbf{WKL} Every infinite binary tree has an infinite path.
 \end{quote}
\noindent
we can recover the existence of an exact competitive equilibrium in the conclusion of Theorem \ref{cMcKenzie}: repeatedly apply Theorem \ref{cMcKenzie} to construct sequences $\left(\mathbf{p}_n\right)_{n\geqslant1},\left(\mathbf{\xi}_{1,n}\right)_{n\geqslant1},\ldots, \left(\mathbf{\xi}_{m,n}\right)_{n\geqslant1},\left(\mathbf{\eta}_n\right)_{n\geqslant1}$ in $\mathbf{R}^N$ such that $\mathbf{p}_n,\mathbf{\xi}_{1,n},\cdots,\mathbf{\xi}_{m,n}$, $\mathbf{\eta}_n$ satisfy \textbf{E1,E3} and $\mathbf{p}_n\cdot\mathbf{\eta}_n>-1/n$ for each $n$. With $m+2$ applications of \textbf{WKL} we can construct an increasing sequence $\left(k_n\right)_{n\geqslant1}$ and points $\mathbf{p},\mathbf{\xi}_{1},\cdots,\mathbf{\xi}_{m},\mathbf{\eta}\in\mathbf{R}^N$ such that $\mathbf{p}_n\rightarrow\mathbf{p},\mathbf{\xi}_{i,n}\rightarrow\mathbf{\xi}_i$ $(1\leqslant i\leqslant m)$, and $\mathbf{\eta}_n\rightarrow\mathbf{\eta}$ as $n\rightarrow\infty$. The continuity of the demand functions, the dot product, and summation ensure that $\mathbf{p},\mathbf{\xi}_{1},\cdots,\mathbf{\xi}_{m},\mathbf{\eta}\in\mathbf{R}^N$ is a competitive equilibrium.

\bigskip
\noindent
Since Theorem \ref{cMcKenzie} allows for companies to make a loss (which can be made arbitrarily small), it is of interest to see how the results of this paper change if the producers profits are shared by the consumers. One should note that since rational entities will act continuously with respect to their profits, any change in consumer behaviour, resulting from companies being able to make a loss, can also be made arbitrary small. We leave a rigorous treatment of this set-up as an open problem.

\end{document}